\newcommand{\defemph}[1]{\emph{\textbf{#1}}} 
\renewcommand{\AA}{\mathbb{A}} 
\newcommand{\effAA}{\mathbb{A}'} 
\newcommand{\combK}{\mathtt{k}} 
\newcommand{\combS}{\mathtt{s}} 
\newcommand{\app}{{\cdot}} 
\newcommand{\numeral}[1]{\overline{#1}} 
\newcommand{\NN}{\mathbb{N}} 
\newcommand{\RR}{\mathbb{R}} 
\newcommand{\lthen}{\Rightarrow} 
\newcommand{\liff}{\Leftrightarrow} 
\newcommand{\of}{{:}} 
\newcommand{\predicate}[2]{#1 \subseteq #2} 
\newcommand{\ileq}{\leq_{\mathsf{I}}} 
\newcommand{\iequiv}{\equiv_{\mathsf{I}}} 
\newcommand{\ibot}{{\bot\!\!\!\bot}} 
\newcommand{\itop}{{\top\!\!\!\top}} 
\newcommand{\iinf}{\sqcap} 
\newcommand{\isup}{\sqcup} 
\newcommand{\ithen}{\sqsupset} 
\newcommand{\ibiginf}[1]{{\textstyle\bigsqcap_{#1}}} 
\newcommand{\ibiginfbase}[1]{{\textstyle\bigsqcap'_{#1}}} 
\newcommand{\ibigsup}[1]{{\textstyle\bigsqcup_{#1}}} 
\newcommand{\true}[1]{\top_{\!#1}} 
\newcommand{\param}[2]{#1^{(#2)}} 
\newcommand{\LEM}{\mathsf{LEM}} 
\newcommand{\DNE}{\mathsf{DNE}} 
\newcommand{\WLEM}{\mathsf{WLEM}} 
\newcommand{\LPO}{\mathsf{LPO}} 
\newcommand{\CT}{\mathsf{CT}}   
\newcommand{\truthdeg}[1]{{#1}^{\star}}
\newcommand{\extent}[1]{\|#1\|}
\newcommand{\support}[1]{\|#1\|}
\newcommand{\up}[1]{{\uparrow}#1} 
\newcommand{\upp}[1]{\mathcal{U}(#1)} 
\newcommand{\sleq}{\leq_{\mathsf{S}}} 
\newcommand{\wleq}{\leq_{\mathsf{w}}} 
\newcommand{\wequiv}{\equiv_{\mathsf{w}}} 
\newcommand{\Wleq}{\leq_{\mathsf{W}}} 
\newcommand{\Wequiv}{\equiv_{\mathsf{W}}} 
\newcommand{\one}{\mathsf{1}} 
\newcommand{\two}{\mathsf{2}} 
\newcommand{\Cantor}{ {\two^\NN } } 
\newcommand{\Baire}{\NN^\NN} 
\newcommand{\effBaire}{(\Baire)_\mathrm{eff}} 
\newcommand{\im}[1]{\mathrm{im}(#1)} 
\newcommand{\Set}{\mathsf{Set}}
\newcommand{\RT}[1]{\mathsf{RT}(#1)}
\newcommand{\Asm}[1]{\mathsf{Asm}(#1)}
\newcommand{\rz}{\Vdash} 
\newcommand{\Pred}[1]{\mathrm{Pred}(#1)} 
\newcommand{\rpredicate}[2]{#1 \in \Pred{#2}} 
\newcommand{\id}{{\rm{id}}}             
\newcommand{\epito}{\twoheadrightarrow} 
\newcommand{\parto}{\rightharpoonup}    
\newcommand{\set}[1]{\{#1\}}
\newcommand{\such}{\mid}
\newcommand{\pow}[1]{\mathcal{P}(#1)} 
\newcommand{\powinh}[1]{\mathcal{P}_{+}(#1)} 
\newcommand{\pAA}{\pow{\AA}}
\newcommand{\ppAA}{\pow{\pow{\AA}}}
\newcommand{\extw}[1]{\widehat{#1}} 
\newcommand{\invim}[1]{#1^{*}}
\newcommand{\allim}[1]{\forall_{#1}}
\newcommand{\someim}[1]{\exists_{#1}}
\newcommand{\pair}[1]{\langle #1 \rangle}
\newcommand{\fst}{\mathtt{fst}}
\newcommand{\snd}{\mathtt{snd}}
\newcommand{\defined}[1]{#1{\downarrow}}
\newcommand{\all}[1]{\forall #1 \,.\,}
\newcommand{\some}[1]{\exists #1 \,.\,}
\title[Instance reducibility and Weihrauch degrees]{Instance reducibility and Weihrauch degrees}
\author[A.~Bauer]{Andrej Bauer\lmcsorcid{0000-0001-5378-0547}}
\thanks{This material is based upon work supported by the Air Force Office of Scientific Research under award number FA9550-21-1-0024.}
\address{Faculty of Mathematics and Physics, University of Ljubljana, Jadranska 19, 1000 Ljubljana, Slovenia}
\address{Institute of Mathematics, Physics and Mechanics, Jadranska 19, 1000 Ljubljana, Slovenia}
\email{Andrej.Bauer@andrej.com}
\begin{document}

\begin{abstract}
  We identify a notion of reducibility between predicates, called \emph{instance reducibility}, which commonly appears in reverse constructive mathematics. The notion can be generally used to compare and classify various principles studied in reverse constructive mathematics (formal Church's thesis, Brouwer's Continuity principle and Fan theorem, Excluded middle, Limited principle, Function choice, Markov's principle, etc.).
  We show that the instance degrees form a frame, i.e., a complete lattice in which finite infima distribute over set-indexed suprema. They turn out to be equivalent to the frame of upper sets of truth values, ordered by the reverse Smyth partial order. We study the overall structure of the lattice: the subobject classifier embeds into the lattice in two different ways, one monotone and the other antimonotone, and the $\lnot\lnot$-dense degrees coincide with those that are reducible to the degree of Excluded middle.

  We give an explicit formulation of instance degrees in a relative realizability topos, and call these
  \emph{extended Weihrauch degrees}, because in Kleene-Vesley realizability the $\lnot\lnot$-dense modest instance degrees correspond precisely to Weihrauch degrees.
  The extended degrees improve the structure of Weihrauch degrees by equipping them with computable infima and suprema, an implication, the ability to control access to parameters and computation of results, and by generally widening the scope of Weihrauch reducibility.
\end{abstract}

\maketitle

\section{Introduction}%
\label{sec:introduction}

A common way of proving an implication between two universally quantified statements,
\begin{equation*}
  (\all{y \in B} \psi(y)) \implies (\all{x \in A} \phi(x)),
\end{equation*}
is to show that for every $x \in A$ there exists $y \in B$ such that $\psi(y)$ implies~$\phi(x)$.
The technique is prevalent in constructive reverse mathematics where we compare undecided universally quantified statements. For example, the total ordering of reals implies the Limited principle of omniscience (LPO),
\begin{equation}%
  \label{eq:lpo}
  (\all{x, y \in \RR} x \leq y \lor y < x)
  \implies
  \all{\alpha \in \Cantor} (\all{n} \alpha_n = 0) \lor (\some{n} \alpha_n = 1),
\end{equation}
by such a reduction: given $\alpha \in \Cantor$, construct the real
$
  x = \lim_{n \to \infty} 2^{-{\min}\set{k \in \NN \such \alpha_k = 1 \lor k = n}}
$
and note that $x \leq 0 \lor 0 < x$ implies $(\all{n} \alpha_n = 0) \lor (\some{n} \alpha_n = 1)$.
One can find many other examples of the method, which therefore deserves to be named.
Let us write $\predicate{\phi}{A}$ to indicate that~$\phi$ is a predicate on~$A$.

\begin{defi}%
  \label{def:instance-reducibility}
  A predicate~$\predicate{\phi}{A}$ is \defemph{instance reducible} to a
  predicate~$\predicate{\psi}{B}$, written $(\phi, A) \ileq (\psi, B)$ or just
  $\phi \ileq \psi$, when
  \begin{equation}%
    \label{eq:instance-reducibility}%
    \all{x \in A} \some{y \in B} \psi(y) \lthen \phi(x).
  \end{equation}
  We say that $\phi$ and~$\psi$  are \defemph{instance
    equivalent}, written $(\phi, A) \iequiv (\psi, B)$ or just $\phi \iequiv \psi$,
  when $\phi \ileq \psi$ and $\psi \ileq \phi$. The equivalence class of a
  predicate~$\phi$ with respect to~$\iequiv$ is its \defemph{instance degree}.
\end{defi}

It is clear that $\ileq$ is a preorder (reflexive and transitive) in
which harder problems are higher up. There is no shortage of instance degrees, just consider the principles that are commonly studied form a constructive point of view: Excluded Middle and its various special cases, such as the Limited Principle of Omniscience (LPO), Brouwer's Fan and Continuity Principles, Markov Principle, Church's Thesis, etc. The relationships between these are well-known~\cite{ishihara06:_rever_mathem_bishop_const_mathem}, and shall not be rehashed here.

\subsubsection*{Overview}%
\label{sec:overview}

In the first part of the paper (\autoref{sec:inst-reduc}), which presupposes basic familiarity with intuitionistic logic, we study instance reducibilities, show that they form a frame, and enjoy a rich structure.
The second part (\autoref{sec:inst-degr-rz}) is written classically and relies on the first part only superficially. It starts with a brief overview of relative realizability models, and continues with a calculation of an explicit description of instance reducibility in such models. We relate instance reducibility to Weihrauch reducibility and show that the former is a proper extension of the latter. We study several examples (\autoref{sec:examples}) that demonstrate how the extended Weihrauch reducibility increases the scope of the subject.


\section{Instance reducibility}%
\label{sec:inst-reduc}

In this section we work in intuitionistic logic without countable choice.
To be formally precise, the text may be interpreted in the internal language of an elementary topos~\cite{jim86:_introd_higher_order_categ_logic,lane92:_sheav_geomet_logic}, although we shall not rely on the topos-theoretic machinery, nor do we expect the reader to be familiar with it.

\subsection{Instance degrees and the Smyth preorder}%
\label{sec:inst-degr-smyth}

The instance degrees form a large preorder whose carrier is the proper class of all predicates on all sets. Let us show that the preorder is essentially small, i.e., it is equivalent to a small one.

Recall that a preorder $(L, {\leq_L})$ is a reflexive transitive relation. Its symmetrization $\equiv_L$ is the equivalence relation defined by $x \equiv_L y \iff x \leq_L y \land y \leq_L x$. The quotient $L/{\equiv_L}$ is partially ordered by the relation induced by $\leq_L$.
Preorders $(L, {\leq_L})$ and $(K, {\leq_K})$ are equivalent when they are equivalent as categories, i.e., there are monotone maps $f : L \to K$ and $g : K \to L$ such that $f (g (y)) \equiv_K y$ for all $y \in K$ and $g (f (x)) \equiv_L x$ for all $x \in L$. If $L$ and $K$ are equivalent then $L/{\equiv_L}$ and $K/{\equiv_K}$ are isomorphic.

In a partially ordered set $(L, {\le})$, the \defemph{upper closure} of $S \subseteq L$ is the set
\begin{equation*}
  \up{S} = \set{y \in L \such \some{x \in S} x \leq y}.
\end{equation*}
The (reverse) \defemph{Smyth preorder}~\cite{smyth78:_power_domain} on the power set~$\pow{L}$ is defined by
\begin{equation*}
  S \sleq T \iff \up{S} \subseteq \up{T},
\end{equation*}
or equivalently $\all{x \in S} \some{y \in T} y \leq x$.
(Beware, the Smyth preorder is normally defined in the opposite way, but we prefer to turn it upside down to match harder instances being higher up.)

We write $\Omega$ for the set of all truth values. It may be identified with the powerset $\pow{\set{\star}}$ of a singleton, with $\emptyset$ representing falsehood and $\set{\star}$ representing truth. Unless specified otherwise, a power set $\pow{L}$ is always partially ordered by~$\subseteq$.

\begin{prop}%
  \label{thm:smyth-equiv}
  The preorder of instance reducibilities is equivalent to the Smyth preorder on $\pow{\Omega}$.
\end{prop}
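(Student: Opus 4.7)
The plan is to construct monotone maps $F$ and $G$ between the two preorders and verify they form a categorical equivalence in the sense recalled just above the statement. Identify each predicate $\predicate{\phi}{A}$ with its characteristic function $\phi \of A \to \Omega$, and set
\[
  F(\phi, A) = \im{\phi} = \set{\phi(x) \such x \in A} \subseteq \Omega.
\]
In the opposite direction, any subset $S \subseteq \Omega$ is itself a predicate on~$S$: the inclusion $\id_S \of S \to \Omega$ sending each truth value to itself. Set $G(S) = (\id_S, S)$.

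For monotonicity of $F$: if $(\phi, A) \ileq (\psi, B)$, then for each $x \in A$ there is some $y \in B$ with $\psi(y) \lthen \phi(x)$, i.e.\ $\psi(y) \leq \phi(x)$ in $\Omega$. Hence every $\phi(x) \in \im{\phi}$ is bounded below by some element of $\im{\psi}$, which is exactly $\im{\phi} \sleq \im{\psi}$. Monotonicity of $G$ is the same calculation read backwards, since $\all{p \in S} \some{q \in T} q \leq p$ is literally the instance-reducibility statement $(\id_S, S) \ileq (\id_T, T)$.

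For the equivalence itself, $F(G(S)) = \im{\id_S} = S$ holds on the nose, so it only remains to show $G(F(\phi, A)) = (\id_{\im{\phi}}, \im{\phi}) \iequiv (\phi, A)$. The reduction $(\phi, A) \ileq (\id_{\im{\phi}}, \im{\phi})$ sends $x \in A$ to $\phi(x) \in \im{\phi}$; then $\id_{\im{\phi}}(\phi(x)) = \phi(x)$ trivially implies $\phi(x)$. Conversely, $(\id_{\im{\phi}}, \im{\phi}) \ileq (\phi, A)$ sends $p \in \im{\phi}$ to any $x \in A$ with $\phi(x) = p$, whose existence is built into the definition of the image, and then $\phi(x) \lthen p$ is a tautology.

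I do not expect a real obstacle: the whole statement boils down to the observation that $\psi(y) \lthen \phi(x)$ is the same thing as $\psi(y) \leq \phi(x)$ in the Heyting algebra $\Omega$, so the existential bookkeeping in the definitions of $\ileq$ and $\sleq$ matches verbatim. The only care needed is that $F$ has a proper-class domain and a small codomain; but monotonicity and the unit/counit identities are checked objectwise, so this is harmless and in fact yields as a corollary that the instance degrees form a small partially ordered set, namely the Smyth quotient of $\pow{\Omega}$.
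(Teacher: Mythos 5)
Your proof is correct and takes exactly the same route as the paper: you use the image map $\phi \mapsto \im{\phi}$ and the inclusion $S \mapsto \phi_S$, then verify monotonicity and the unit/counit equivalences, which the paper leaves as ``one checks easily.'' The only addition is your (accurate) remark about the size issue being harmless, and the observation that $p \lthen q$ is precisely $p \leq q$ in $\Omega$, which indeed makes the two definitions line up verbatim.
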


\begin{proof}
  A predicate $\predicate{\phi}{A}$ corresponds to its image
  \begin{equation*}
    \im{\phi} = \set{p \in \Omega \mid \some{x \in A} p = \phi(x)} \in \pow{\Omega}.
  \end{equation*}
  Conversely, $S \subseteq \Omega$ corresponds to the predicate $\predicate{\phi_S}{S}$ classified by the inclusion, $\phi_S(p) = p$.
  One checks easily that these correspondences are monotone with respect to instance reducibility and the Smyth preorder, and that they constitute an equivalence.
\end{proof}

The collection $\upp{L} = \set{ S \subseteq L \mid \up{S} = S }$ of the \defemph{upper sets} ordered by~$\subseteq$ is a frame in which infima and suprema are computed as intersections and unions, respectively. Upper closure $\up{} : \pow{L} \to \upp{L}$ is the poset reflection of the Smyth preorder.
The partial order of instance degrees is therefore isomorphic to the frame $\upp{\Omega}$, and in a sense that is all that needs to be said.
Nevertheless, let us spell out the ordered-theoretic structure of instance degrees directly in terms of predicates, and study it a bit more closely.

\subsection{Transfer of predicates along maps}%
\label{sec:inst-reduc-transf}

It often happens that a reduction $(\phi,A) \ileq (\psi,B)$ is accomplished by means
of a map $f : A \to B$ such that $\psi(f(x)) \lthen \phi(x)$ for all $x \in A$, in which
case we say that~$f$ \defemph{witnesses} the reduction $\phi \ileq \psi$. Not all reductions
need be witnessed by maps, for in general there may be no choice map
for~\eqref{eq:instance-reducibility}.

A predicate $\predicate{\psi}{B}$ may be transferred along a map $f : A \to B$ to a
predicate $\predicate{\invim{f}{\psi}}{A}$, defined by
\begin{equation*}
  (\invim{f}{\psi})(x) \iff \psi(f(x)).
\end{equation*}
In the other direction, a predicate $\predicate{\phi}{A}$ may be transferred to one on~$B$
in two ways, using either the existential or the universal quantifier, to give predicates
$\predicate{\allim{f}{\phi}}{B}$ and $\predicate{\someim{f}{\phi}}{B}$, defined by
\begin{align*}
  (\allim{f}{\phi})(y) &\iff
   \all{x \in A} f(x) = y \lthen \phi(x)
  \\
  (\someim{f}{\phi})(y) &\iff
  \some{x \in A} f(x) = y \land \phi(x).
\end{align*}

\begin{lem}%
  \label{lem:transfer-ileq}
  Let $f : A \to B$ be a map, $\predicate{\phi}{A}$ and $\predicate{\psi}{B}$. Then
  $\invim{f}{\psi} \ileq \psi$ and $\phi \ileq \allim{f}{\phi}$. If $f$ is surjective then also $\psi \ileq \invim{f}{\psi}$ and $\someim{f}{\phi} \ileq \phi$.
\end{lem}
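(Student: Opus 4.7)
The plan is to verify each of the four reducibilities directly from the definition~\eqref{eq:instance-reducibility}, by exhibiting a canonical witness for the existential quantifier that it demands. In each case the witness is either $f(x)$ itself or a preimage supplied by surjectivity of $f$, and no choice principle is needed.

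For $\invim{f}{\psi} \ileq \psi$, I take, for each $x \in A$, the witness $y = f(x) \in B$. Then $\psi(y)$ is literally $(\invim{f}{\psi})(x)$, so the implication $\psi(y) \lthen (\invim{f}{\psi})(x)$ is trivial. For $\phi \ileq \allim{f}{\phi}$, I again choose $y = f(x)$ and observe that $(\allim{f}{\phi})(f(x))$ unfolds to $\all{x' \in A} f(x') = f(x) \lthen \phi(x')$; instantiating at $x' = x$ with reflexivity of equality yields $\phi(x)$. Neither of these two parts uses surjectivity.

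For the remaining two reducibilities assume $f$ is surjective, meaning $\all{y \in B} \some{x \in A} f(x) = y$. To establish $\psi \ileq \invim{f}{\psi}$, I choose, for each $y \in B$, a preimage $x \in A$ with $f(x) = y$; then $(\invim{f}{\psi})(x)$ equals $\psi(f(x)) = \psi(y)$, so the implication to $\psi(y)$ is immediate. For $\someim{f}{\phi} \ileq \phi$, I again pick for each $y$ a preimage $x$ with $f(x) = y$, and observe that from $\phi(x)$ we obtain $(\someim{f}{\phi})(y)$ by taking $x' = x$ as the existential witness in the definition of $\someim{f}{\phi}$.

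No serious obstacle appears; the only point to keep straight is whether the witness supplied in each reduction lives in $A$ or in $B$, and to notice that surjectivity (in the intuitionistic sense above) provides exactly the preimages required for the two reducibilities that run ``against the grain'' of $f$, whereas the other two rely on nothing beyond the action of $f$ itself.
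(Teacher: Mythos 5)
Your proof is correct and takes essentially the same route as the paper: the first two reductions are witnessed by the map $f$ itself, and the second two use surjectivity to send $y \in B$ to some preimage $x \in A$ with $f(x) = y$. You also correctly observe that no choice function is required, since the existential in~\eqref{eq:instance-reducibility} is directly supplied by the surjectivity hypothesis $\all{y \in B}\some{x \in A} f(x) = y$.
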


\begin{proof}
  The first two reductions are witnessed by~$f$. If $f$ is surjective then we obtain
  the remaining two reductions by reducing $y \in B$ to $x \in A$ such that $f(x) = y$.
\end{proof}

When the map $f$ in the previous lemma is a projection $\pi_1 : A \times B \to B$ and $A$
is inhabited, we obtain for any $\predicate{\phi}{A \times B}$
\begin{equation*}
  (\someim{\pi_1}{\phi}) \ileq \phi \ileq (\allim{\pi_1}{\phi}),
\end{equation*}
which is more memorable with the abuse notation
\begin{equation*}
  (\some{x \in A} \phi(x,y))
  \; \ileq \;
  \phi(x,y)
  \; \ileq \;
  (\all{x \in A} \phi(x,y)).
\end{equation*}

\subsection{The order-theoretic structure}%
\label{sec:order-theor-struct}

By~\autoref{thm:smyth-equiv} the instance degrees form a frame, whose structure we describe explicitly in this section.

\begin{prop}%
  \label{prop:ileq-lattice}
  The instance degrees form a bounded distributive lattice.
\end{prop}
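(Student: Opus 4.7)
The plan is to exhibit explicit representatives for the lattice operations directly in terms of predicates, and then verify the universal properties by unfolding definitions. In view of \autoref{thm:smyth-equiv} one could just transport the frame structure of $\upp{\Omega}$ back through the equivalence, but since the rest of the section is phrased predicate-theoretically, I would carry out the argument at that level and only appeal to $\upp{\Omega}$ for cross-checking.

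For the bottom $\ibot$ I would take the unique predicate on the empty set, so that $\ibot \ileq \phi$ holds vacuously for every $\phi$. For the top $\itop$ I would take the constantly-false predicate on the singleton $\set{\star}$, so $\phi \ileq \itop$ is witnessed by mapping every $x \in A$ to $\star$ and using $\bot \lthen \phi(x)$. Given $\predicate{\phi}{A}$ and $\predicate{\psi}{B}$, I would define the join $\predicate{\phi \isup \psi}{A + B}$ by case analysis, $(\phi \isup \psi)(\mathrm{inl}\,x) = \phi(x)$ and $(\phi \isup \psi)(\mathrm{inr}\,y) = \psi(y)$, and the meet $\predicate{\phi \iinf \psi}{A \times B}$ by
\begin{equation*}
  (\phi \iinf \psi)(x, y) \liff \phi(x) \lor \psi(y).
\end{equation*}

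The universal properties then unwind to short calculations. For the join, $\phi \ileq \chi$ and $\psi \ileq \chi$ together provide, for each element of $A + B$, a witness in $\dom{\chi}$, giving $\phi \isup \psi \ileq \chi$; conversely, restricting the witnessing assignment to each summand recovers the two reductions. For the meet, from $\chi \ileq \phi \iinf \psi$ each $z \in \dom{\chi}$ is sent to some pair $(x, y) \in A \times B$ with $\phi(x) \lor \psi(y) \lthen \chi(z)$, which yields $\chi \ileq \phi$ (via $x$) and $\chi \ileq \psi$ (via $y$); the reverse implication combines the $A$- and $B$-witnesses into one pair. The reductions $\phi \iinf \psi \ileq \phi$ and $\phi \ileq \phi \isup \psi$ are special cases of \autoref{lem:transfer-ileq}.

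Distributivity $\phi \iinf (\psi \isup \chi) \iequiv (\phi \iinf \psi) \isup (\phi \iinf \chi)$ is essentially an isomorphism of indexing sets, $A \times (B + C) \cong (A \times B) + (A \times C)$, under which the two predicates carry pointwise the same truth values $\phi(x) \lor \psi(y)$ and $\phi(x) \lor \chi(z)$. There is no real obstacle; the only point requiring care is keeping the direction of the order straight, since the correspondence with $\upp{\Omega}$ aligns harder predicates with larger upper sets and it is easy to confuse meets with joins when translating from the Weihrauch intuition.
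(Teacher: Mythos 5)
Your proposal is correct and follows the paper's proof essentially verbatim: the same representatives for $\ibot$, $\itop$, $\phi\isup\psi$ on $A+B$, and $\phi\iinf\psi$ on $A\times B$ via $\phi(x)\lor\psi(y)$, with distributivity checked through the canonical isomorphism $A\times(B+C)\cong(A\times B)+(A\times C)$. Your observation that the two sides of the distributivity law are in fact \emph{pointwise equal} under that isomorphism is a clean way to put it (the paper's stated side-condition invoking distributivity of $\lor$ over $\land$ is not actually what the witnessing map requires), but the overall route is the same.
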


\begin{proof}
  The empty predicate $\predicate{\emptyset }{\emptyset}$ on the empty set is instance
  reducible to every other predicate and so it represents the smallest instance degree~$\ibot$.

  The largest instance degree $\itop$ is represented by the empty predicate
  $\predicate{\emptyset}{\one}$ on the singleton $\one = \set{\star}$, or more generally
  by any predicate $\phi \subseteq A$ with a \emph{counter-example}, which is $a \in A$ such that $\lnot \phi(a)$.

  The supremum of $\predicate{\phi}{A}$ and $\predicate{\psi}{B}$ is the predicate
  $\phi \isup \psi$, defined as follows. Let $A + B$ be the disjoint union with canonical
  inclusions $\iota_1 : A \to A + B$ and $\iota_2 : B \to A + B$. Define $\phi \isup \psi$
  on $A + B$ by
  \begin{align*}
    (\phi \isup \psi)(\iota_1(x)) \iff \phi(x)
    \qquad\text{and}\qquad
    (\phi \isup \psi)(\iota_2(y)) \iff \psi(y).
  \end{align*}
  The reductions of $\phi$ and $\psi$ to $\phi \isup \psi$ are witnessed by the canonical
  inclusions.
  %

  The infimum of $\predicate{\phi}{A}$ and $\predicate{\psi}{B}$ is the predicate
  $\predicate{\phi \iinf \psi}{A \times B}$, defined by
  \begin{equation*}
    (\phi \iinf \psi)(x, y) \iff \phi(x) \lor \psi(y).
  \end{equation*}
  The canonical projections witness reductions of $\phi \iinf \psi$ to $\phi$ and $\psi$.

  The distributivity laws are easily checked. We only indicate how to prove
  \begin{equation*}
    (\phi \isup \psi) \iinf \theta
    \ileq
    (\phi \iinf \theta) \isup (\psi \iinf \theta),
  \end{equation*}
  for all $\predicate{\phi}{A}$, $\predicate{\psi}{B}$, and $\predicate{\theta}{C}$. The
  reduction is witnessed by the canonical isomorphism
  $(A + B) \times C \to (A \times C) + (B \times C)$ so long as, for any $x \in A$,
  $y \in B$, and $z \in C$,
  \begin{equation*}
    (\phi(x) \lor \theta(z)) \land (\psi(x) \lor \theta(z))
    \qquad\text{implies}\qquad
    (\phi(x) \land \psi(y)) \lor \theta(z),
  \end{equation*}
  which is just distributivity of disjunction over conjunction.
\end{proof}

\begin{prop}
  The instance degrees form a Heyting algebra.
\end{prop}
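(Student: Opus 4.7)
The plan is to exploit the equivalence $\phi \mapsto \up{\im{\phi}}$ of \autoref{thm:smyth-equiv}, which identifies instance degrees with the frame $\upp{\Omega}$. Since every frame is a Heyting algebra, the implication $\phi \ithen \psi$ exists abstractly, and the remaining task is to exhibit it as a concrete predicate.

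I would first compute the Heyting implication in $\upp{\Omega}$ explicitly. With $U = \up{\im{\phi}}$ and $V = \up{\im{\psi}}$, starting from the standard formula $\set{p \such \all{q \in \Omega} (p \leq q) \land (q \in U) \lthen q \in V}$ and instantiating the universal quantifier at $q = \phi(x) \lor p$ in one direction (which always lies in $U$), while using upper-closure of $U$ in the other, a short intuitionistic calculation collapses this to
\[
  C := \set{p \in \Omega \such \all{x \in A} \some{y \in B} \psi(y) \lthen \phi(x) \lor p}.
\]
I would then define $\phi \ithen \psi$ as the predicate on the index set $C$ whose value at $p \in C$ is $p$ itself. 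Because $C$ is upper-closed, $\up{\im{\phi \ithen \psi}} = C$; by \autoref{thm:smyth-equiv} the instance degree of $\phi \ithen \psi$ is therefore the Heyting implication of the degrees of $\phi$ and $\psi$ in $\upp{\Omega}$, and the adjoint law $\theta \iinf \phi \ileq \psi \iff \theta \ileq (\phi \ithen \psi)$ follows.

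The main obstacle is to resist more "intuitive" guesses. Indexing by the function space $B^A$ with value $\all{x} \phi(x) \lthen \psi(f(x))$, or by $A$ with value $\phi(x) \lthen \some{y} \psi(y)$, is tempting but gives the wrong degree; a useful diagnostic is $\phi = \psi = \ibot$, where the Heyting axioms force $\phi \ithen \psi = \itop$ but both candidates produce strictly smaller degrees corresponding to $\set{\top} \in \upp{\Omega}$. Indexing directly by the subset of $\Omega$ output by the frame calculation sidesteps such pathologies by making the image of the defining predicate correct by construction.
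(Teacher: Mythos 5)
Your construction is identical to the paper's: both define $\phi \ithen \psi$ on the set $C = \set{p \in \Omega \such \all{x \in A}\some{y \in B}\psi(y) \lthen \phi(x) \lor p}$ by sending $p$ to itself. What differs is the verification. The paper checks the adjunction $\theta \iinf \phi \ileq \psi \iff \theta \ileq \phi \ithen \psi$ directly by unfolding definitions (forward direction: each $\theta(z)$ lands in $C$; backward direction: from $p \lthen \theta(z)$ and $\psi(y) \lthen \phi(x) \lor p$ conclude $\psi(y) \lthen \theta(z) \lor \phi(x)$). You instead invoke \autoref{thm:smyth-equiv}, compute the Heyting implication $U \ithen V = \set{p \mid \all{q}\,(p \leq q \land q \in U) \lthen q \in V}$ in the frame $\upp{\Omega}$, simplify it to $C$ by instantiating $q := \phi(x)\lor p$ on one side and using upper-closedness of $U$ on the other, observe $C$ is itself upper-closed so $\up{\im{\phi\ithen\psi}} = C$, and then transport the adjoint law across the order-isomorphism. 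Both are sound. Your route makes explicit the structural remark the paper merely asides to (``a dressed up version of implication in $\upp{\Omega}$'') and reduces the verification to a computation in a concrete frame; the paper's route is more self-contained, requiring no appeal to the equivalence or to the abstract fact that frames are Heyting algebras, and exhibits the witnessing reductions directly. One small thing to keep in mind if you write your version out in full: you should say a word about why an order-isomorphism of posets automatically transports the Heyting structure (implication is uniquely determined by the order, so nothing extra need be checked) — that's what licenses the final step.
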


\begin{proof}
  We only have to define implication.
  Given $\predicate{\phi}{A}$ and $\predicate{\psi}{B}$, define the set
  \begin{equation*}
    (A \ithen B) = \set{p \in \Omega \such \all{x \in A} \some{y \in B} \psi(y) \lthen \phi(x) \lor p},
  \end{equation*}
  and the predicate $\phi \ithen \psi$ on $A \ithen B$ by $(\phi \ithen \psi)(p) = p$.
  We need to show that, for all $\predicate{\theta}{C}$,
  \begin{equation*}
    \theta \iinf \phi \ileq \psi
    \quad\text{if and only if}\quad
    \theta \ileq \phi \ithen \psi.
  \end{equation*}
  To see that $\theta \iinf \phi \ileq \psi$ implies $\theta \ileq \phi \ithen \psi$, it suffices to check that for any $z \in C$ we have $\theta(z) \in A \ithen B$, which follows directly from the definition of~$\theta \iinf \phi$ and $\theta \iinf \phi \ileq \psi$.

  Conversely, suppose $\theta \ileq \phi \ithen \psi$.
  Consider any $z \in C$ and $x \in A$.
  There is $p \in A \ithen B$ such that $p \lthen \theta(z)$.
  By definition of $A \ithen B$ there is $y \in B$ such that $\psi(y) \lthen \phi(x)  \lor p$, and hence $\psi(y) \lthen \theta(z) \lor \phi(x)$, as required.
\end{proof}

\noindent
The preceding construction of implication is quite obviously just a dressed up version of implication in $\upp{\Omega}$. It would be desirable to have a more direct description.

\begin{prop}%
  \label{prop:suprema}
  For every set $I$, the instance degrees have $I$-indexed suprema.
\end{prop}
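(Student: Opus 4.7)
The plan is to imitate the construction of the binary supremum from \autoref{prop:ileq-lattice}, replacing $A + B$ with an $I$-indexed disjoint union.

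First, given a family of predicates $(\predicate{\phi_i}{A_i})_{i \in I}$, I would form the disjoint sum $C = \coprod_{i \in I} A_i$ with canonical injections $\iota_i \of A_i \to C$, and define the predicate $\predicate{\ibigsup{i \in I} \phi_i}{C}$ fiberwise, by
\begin{equation*}
  \bigl(\ibigsup{i \in I} \phi_i\bigr)(\iota_i(x)) \iff \phi_i(x).
\end{equation*}
Then for each $i \in I$ the injection $\iota_i$ directly witnesses $\phi_i \ileq \ibigsup{i} \phi_i$, so this is an upper bound for the family.

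Second, I would check the universal property: suppose $\predicate{\theta}{D}$ satisfies $\phi_i \ileq \theta$ for every $i \in I$. Unfolding this hypothesis yields
\begin{equation*}
  \all{i \in I} \all{x \in A_i} \some{d \in D} \theta(d) \lthen \phi_i(x).
\end{equation*}
Given any element $c \in C$, it has the form $c = \iota_i(x)$ for some unique $i \in I$ and $x \in A_i$, so the existential above immediately supplies a $d \in D$ with $\theta(d) \lthen \phi_i(x) = (\ibigsup{i} \phi_i)(c)$. Hence $\ibigsup{i} \phi_i \ileq \theta$, establishing that the construction gives the least upper bound.

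I do not anticipate any serious obstacle. The one point worth flagging is that no form of choice is needed: although the upper bound property chains an $I$-indexed quantifier in front of the defining existential, we only ever instantiate it at a specific pair $(i, x)$ coming from a point of $C$, so the witness $d$ is obtained pointwise from the assumed reducibilities $\phi_i \ileq \theta$ rather than from a single choice function over $I$. This is consistent with the background intuitionistic logic without countable choice, and it agrees with the frame-theoretic picture from \autoref{thm:smyth-equiv}, where the supremum in $\upp{\Omega}$ is just the union of the images $\up{\im{\phi_i}}$.
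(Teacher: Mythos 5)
Your proof is correct and follows essentially the same route as the paper: the supremum is realized as a predicate on the coproduct $\coprod_{i \in I} A_i$, with the upper-bound property witnessed by the injections $\iota_j$, and the least-upper-bound property obtained by instantiating each hypothesis $\phi_i \ileq \theta$ at the particular $(i,x)$ determined by a point of the coproduct. The only cosmetic difference is that the paper defines $\ibigsup{i} \phi_i$ by an existential formula rather than fiberwise, but since every element of the coproduct is uniquely of the form $\iota_i(x)$ these definitions coincide; your remark that no choice is needed is also exactly right.
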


\begin{proof}
  Let $\predicate{\phi_i}{A_i}$ be a family of predicates indexed by $i \in I$. We claim
  that the supremum of the family is the predicate $\ibigsup{i \in I} \phi_i$ on the
  coproduct $\coprod_{i \in I} A_i$, defined by
  \begin{equation*}
    (\ibigsup{i \in I} \phi_i)(u) \iff
    \some{i \in I} \some{x \in A_i} u = \iota_i(x) \land \phi_i(x),
  \end{equation*}
  where $\iota_i : A_i \to \coprod_{i \in I} A_i$ is the $i$-th canonical inclusion.

  For any $j \in I$, the reduction $\phi_j \ileq \ibigsup{i \in I} \phi_i$ is witnessed
  by~$\iota_j$, so $\ibigsup{i} \phi_i$ is an upper bound of all the~$\phi_i$'s.
  To prove it is the least upper bound, consider any common upper bound
  $\predicate{\theta}{C}$ of the predicates $\phi_i$. Given any $u \in \coprod_i A_i$
  there are (unique) $j \in I$ and $x \in A_j$ such that $u = \iota_j(x)$. Because
  $\phi_j \ileq \theta$ there is $y \in C$ such that $\theta(y) \lthen \phi_j(x)$,
  therefore $\theta(y) \lthen (\ibigsup{i} \phi_i)(u)$ and hence
  $\ibigsup{i} \phi_i \ileq \theta$, as required.
\end{proof}

\begin{prop}%
  \label{prop:infima}
  For every set $I$, the instance degrees have $I$-indexed infima.
\end{prop}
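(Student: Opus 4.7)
The plan is to mirror the construction of Heyting implication from a few propositions above, carving the infimum out directly inside the subobject classifier $\Omega$ rather than attempting a product-based definition. Given a family $\predicate{\phi_i}{A_i}$ with $i \in I$, I would set
\[
D = \set{p \in \Omega \such \all{i \in I} \some{x \in A_i} \phi_i(x) \lthen p}
\]
and define the predicate $\ibiginf{i \in I} \phi_i$ on $D$ by $(\ibiginf{i \in I} \phi_i)(p) = p$. Intuitively, $D$ is the intersection of the upper closures $\up{\im{\phi_i}}$, which matches the infimum in $\upp{\Omega}$ under the equivalence of \autoref{thm:smyth-equiv}.

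Next I would verify the two halves of the universal property. For each fixed $j \in I$, the reduction $\ibiginf{i \in I} \phi_i \ileq \phi_j$ is immediate: any $p \in D$ comes by definition with a witness $x \in A_j$ such that $\phi_j(x) \lthen p = (\ibiginf{i} \phi_i)(p)$. For the greatest-lower-bound property, let $\predicate{\theta}{C}$ satisfy $\theta \ileq \phi_i$ for every $i \in I$. I would show $\theta \ileq \ibiginf{i} \phi_i$ by fixing $z \in C$ and observing that, for each $i$, the assumed reduction supplies some $x \in A_i$ with $\phi_i(x) \lthen \theta(z)$, which is precisely the condition $\theta(z) \in D$; then $p = \theta(z)$ itself witnesses the reduction, since $(\ibiginf{i} \phi_i)(p) = p \lthen \theta(z)$ trivially.

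The obstacle worth highlighting is why one cannot simply mimic the binary infimum by putting the predicate $\some{i \in I} \phi_i(f(i))$ on the product $\prod_{i \in I} A_i$. That natural generalisation is still a lower bound, but establishing the universal property would require assembling, for each $z \in C$, a single section $f \in \prod_i A_i$ out of the family of existence statements $\some{x \in A_i} \phi_i(x) \lthen \theta(z)$, which is exactly the sort of choice the section has explicitly committed to avoid. Working inside $\Omega$ sidesteps this, because the witness $p = \theta(z)$ already internalises the required existential information as a single truth value rather than demanding an external choice function.
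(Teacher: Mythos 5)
Your proof is correct but takes a different route from the paper's. You construct the infimum directly as the subset $D = \bigcap_{i} \up{\im{\phi_i}}$ of $\Omega$, equipped with the inclusion predicate; this is exactly the infimum in $\upp{\Omega}$ transported across the equivalence of \autoref{thm:smyth-equiv}, and the verification of both halves of the universal property is straightforward as you carry it out. The paper instead places the infimum on the product $\prod_{i \in I} \powinh{A_i}$ of \emph{inhabited} subsets of the $A_i$, with $(\ibiginf{i} \phi_i)(f) \iff \some{i} \some{x \in f(i)} \phi_i(x)$; the greatest-lower-bound reduction from a common lower bound $\theta$ is then witnessed by the map $z \mapsto \bigl(i \mapsto \set{x \in A_i \such \phi_i(x) \lthen \theta(z)}\bigr)$, whose components are inhabited precisely because $\theta \ileq \phi_i$ — so the paper dodges the choice problem you identify not by retreating into $\Omega$, but by collecting \emph{all} candidate witnesses into a subset rather than selecting one. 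Both avoid choice, but they buy different things. Your $\Omega$-based construction is shorter and makes the identification with the frame $\upp{\Omega}$ transparent; indeed it is the same maneuver the paper itself uses for Heyting implication, after which it remarks that such a construction is ``just a dressed up version'' of the $\upp{\Omega}$ structure and that a more direct description would be desirable. The paper's product-based formula is that more direct description in the case of infima: it keeps the underlying set a genuine product over $I$, which pays off in \autoref{sec:cons-aczels-pres} (where, for projective $I$, it simplifies to the naive $\prod_i A_i$ formula you correctly flag as problematic in general) and in the realizability section, where a representative living over a product is easier to interpret concretely than one carved out of the subobject classifier. Your remark on why the naive $\prod_i A_i$ predicate fails without choice is accurate and matches the paper's motivation for restricting to projective $I$ in the later subsection.
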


\begin{proof}
  Let $\predicate{\phi_i}{A_i}$ be a family of predicates indexed by $i \in I$. For the
  underlying set of the infimum of the family we take the product
  $\prod_{i \in I} \powinh{A_i}$,
  where $\powinh{X}$ is the set of all inhabited subsets of~$X$. We define the
  predicate $\ibiginf{i \in I} \phi_i$ by
  \begin{equation}%
    \label{eq:ibiginf}
    (\ibiginf{i \in I} \phi_i)(f) \iff
    \some{i \in I} \some{x \in f(i)} \phi_i(x).
  \end{equation}

  For any $j \in I$, the reduction $\ibiginf{i \in I} \phi_i \ileq \phi_j$ is proved as
  follows. Given any $f \in \prod_{i \in I} \powinh{A_i}$, there exists $x \in f(j)$
  because $f(j)$ is inhabited. If $\phi_j(x)$ then $(\ibiginf{i \in I} \phi_i)(f)$
  by~\eqref{eq:ibiginf}.

  To prove that $\ibiginf{i \in I} \phi_i$ is the greatest lower bound, consider any
  common lower bound $\predicate{\psi}{B}$ of the predicates~$\phi_i$. The desired
  reduction $\psi \ileq \ibiginf{i \in I} \phi_i$ is witnessed by the map
  $r : B \to \prod_{i \in I} \powinh{A_i}$, defined by
  \begin{equation*}
    r(z)(i) = \set{ x \in A_i \such \phi_i(x) \lthen \psi(z)}.
  \end{equation*}
  For every $i \in I$, $r(z)(i)$ is inhabited because $\psi \ileq \phi_i$ and so there is
  $x \in A_i$ such that $\phi_i(x)$ implies $\psi(z)$. The map $r$ witnesses the reduction
  because, for any $z \in B$, if $(\ibiginf{i \in I} \phi_i)(r(z))$ then there exists
  $i \in I$ and $x \in r(z)(i)$ such that $\phi_i(x)$, but then $\psi(z)$ follows because
  $\phi_i(x) \lthen \psi(z)$ by the definition of~$r$.
\end{proof}




\subsection{Instance reducibilities and projective sets}%
\label{sec:cons-aczels-pres}

Recall that $I$ is a \defemph{projective} set when every $I$-indexed family ${(A_i)}_{i \in I}$ of inhabited sets has a choice function, i.e., the product $\prod_{i \in I} A_i$ is
inhabited. Equivalently, a set~$I$ is projective if every total relation on~$I$ has a
choice function:
\begin{equation*}
  (\all{i \in I} \some{x \in A} \phi(i, x))
  \lthen
  \some{f : A^I} \all{i \in I} \phi(i, f(i)).
\end{equation*}
In~\autoref{sec:inst-degr-rz} we shall interpret instance reducibility in
realizability toposes, which validate Aczel's \defemph{presentation
  axiom}~\cite{aczel78:_type_theor_inter_const_set_theor}.
\emph{``Every set is the image of a projective set.''}
It will be useful to know a couple of consequence of Aczel's axiom.

\begin{prop}%
  \label{prop:pa-equivalent-to-projective}
  If the presentation axiom holds then every predicate is instance equivalent to a
  predicate on a projective set.
\end{prop}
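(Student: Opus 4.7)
The plan is to reduce the proposition to an immediate application of \autoref{lem:transfer-ileq}. Given an arbitrary predicate $\predicate{\phi}{A}$, I invoke the presentation axiom to obtain a projective set~$P$ together with a surjection $e \of P \epito A$. The natural candidate for the required projective-based representative is then the transferred predicate $\predicate{\invim{e}{\phi}}{P}$, where $(\invim{e}{\phi})(p) \iff \phi(e(p))$.

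It remains to verify $\phi \iequiv \invim{e}{\phi}$. The direction $\invim{e}{\phi} \ileq \phi$ is just the first clause of \autoref{lem:transfer-ileq}, witnessed by~$e$ itself. The reverse direction $\phi \ileq \invim{e}{\phi}$ uses the surjectivity of~$e$: for each $x \in A$ we must supply some $p \in P$ with $\invim{e}{\phi}(p) \lthen \phi(x)$, and any $p$ with $e(p) = x$ does the job. This is exactly the second half of \autoref{lem:transfer-ileq}. Since~$P$ is projective by construction, $\invim{e}{\phi}$ is a predicate on a projective set instance equivalent to~$\phi$, as required.

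I do not expect any real obstacle here; the entire argument is a packaging of \autoref{lem:transfer-ileq} with the presentation axiom. The only subtlety worth flagging is that the reduction $\phi \ileq \invim{e}{\phi}$ is in general not witnessed by a map $A \to P$ (that would require a section of~$e$, which the presentation axiom does not provide), but instance reducibility only demands the weaker pointwise existential, which surjectivity of~$e$ supplies.
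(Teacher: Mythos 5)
Your proposal is correct and is essentially identical to the paper's proof: both take a projective cover $e \of P \epito A$ supplied by the presentation axiom and conclude $\phi \iequiv \invim{e}{\phi}$ by \autoref{lem:transfer-ileq}. Your additional remark that the reduction $\phi \ileq \invim{e}{\phi}$ is generally not witnessed by a map is a nice clarification of a point the paper leaves implicit.
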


\begin{proof}
  If $\predicate{\phi}{A}$ and $e : B \epito A$ is a cover of $A$ by a projective set $B$,
  then $\phi \iequiv \invim{e}{\phi}$ by \autoref{lem:transfer-ileq}.
\end{proof}

When an infimum is indexed by a projective set, a simpler formula than the one given in
\autoref{prop:infima} can be used. Given a family of predicates $\predicate{\phi_i}{A_i}$
indexed by $i \in I$ with~$I$ projective, define the predicate
$\ibiginfbase{i \in I} \phi_i$ on the product $\prod_{i \in I} A_i$ by
\begin{equation*}
  ({\ibiginfbase{i \in I} \phi_i})(f) \iff
  \some{i \in I} \phi_i(f(i)).
\end{equation*}
We claim that $\ibiginfbase{i \in I} \phi_i \iequiv \ibiginf{i \in I} \phi_i$. The
reduction $\ibiginfbase{i \in I} \phi_i \ileq \ibiginf{i \in I} \phi_i$ is witnessed by
the map which takes $f \in \prod_{i \in I} A_i$ to the map $i \mapsto \set{f(i)}$. For the
opposite reduction, consider any $g \in \prod_{i \in I} \powinh{A_i}$. Because~$I$ is
projective there is a choice map $c \in \prod_{i \in I} A_i$ such that $c(i) \in g(i)$ for
all $i \in I$. If there is $j \in I$ such that $\phi_j(c(j))$ then
$(\ibiginf{i \in I} A_i)(g)$ holds because $j$ and $c(j)$ witness~\eqref{eq:ibiginf}
for~$g$.

\subsection{Reduction to many instances}%
\label{sec:param-reduct-many}

Occasionally a single instance of $\phi$ reduces to \emph{many} instances of $\psi$. Such variants are captured by the following operation.

\begin{defi}
  For any set $I$ and a predicate $\predicate{\phi}{A}$ define the
  \defemph{$I$-parameterization of $\phi$} to be the predicate $\predicate{\param{\phi}{I}}{A^I}$
  defined on the set $A^I$ of all functions from $I$ to $A$ by
  \begin{equation*}
    \param{\phi}{I}(f) \iff \all{i \in I} \phi(f(i)).
  \end{equation*}
\end{defi}

\begin{prop}
  The following hold for all $\predicate{\phi}{A}$ and $\predicate{\psi}{B}$:
  \begin{enumerate}
  \item If $I$ is inhabited then $\phi \ileq \param{\phi}{I}$.
  \item If $I$ is a retract of $J$ then $\param{\phi}{I} \leq \param{\phi}{J}$.
  \item If $I$ is projective then $\phi \ileq \psi$ implies $\param{\phi}{I} \ileq \param{\psi}{I}$.
  \end{enumerate}
\end{prop}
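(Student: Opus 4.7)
The plan for all three items is to exhibit explicit witnessing maps in the sense of \autoref{sec:inst-reduc-transf} on transfer of predicates, reducing each instance reducibility to a routine pointwise implication.

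For part (1), I would pick an element $i_0 \in I$ supplied by inhabitation, and witness $\phi \ileq \param{\phi}{I}$ by the constant-function map $x \mapsto \lambda i.\, x \colon A \to A^I$. The required implication $(\all{i \in I} \phi(x)) \lthen \phi(x)$ is then just instantiation at $i_0$. For part (2), given a section-retraction pair $s : I \to J$ and $r : J \to I$ with $r \circ s = \id$, I would witness $\param{\phi}{I} \ileq \param{\phi}{J}$ by precomposition $f \mapsto f \circ r \colon A^I \to A^J$; the verification amounts to observing $(f \circ r)(s(i)) = f(i)$ for every $i \in I$, so $\param{\phi}{J}(f \circ r)$ implies $\param{\phi}{I}(f)$.

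Part (3) is the one where the hypothesis actually does nontrivial work. Assuming $\phi \ileq \psi$ and given $f \in A^I$, I would form the family of subsets
\begin{equation*}
  B_i = \set{y \in B \such \psi(y) \lthen \phi(f(i))} \subseteq B.
\end{equation*}
By $\phi \ileq \psi$ applied at $f(i) \in A$, each $B_i$ is inhabited. Projectivity of $I$ then furnishes a choice function $g \in B^I$ with $g(i) \in B_i$ for every $i \in I$. Now $\param{\psi}{I}(g)$ gives $\psi(g(i))$ and hence $\phi(f(i))$ for every $i$, i.e., $\param{\phi}{I}(f)$, so the assignment $f \mapsto g$ witnesses $\param{\phi}{I} \ileq \param{\psi}{I}$.

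The only real obstacle is precisely the use of projectivity in part (3): without it, one can still produce an individual witness $y_i$ for each $i \in I$, but there is no uniform way to package these into a single function $g \in B^I$. Parts (1) and (2) are formal manipulations of functions and retractions that require neither projectivity nor any other choice principle.
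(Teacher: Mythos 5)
Your proof is correct and coincides with the paper's own argument in all three parts: the constant-map witness for (1), precomposition with the retraction for (2), and the appeal to projectivity to package the pointwise witnesses furnished by $\phi \ileq \psi$ into a single choice function for (3). The only difference is that you spell out the inhabited family $B_i$ explicitly where the paper compresses this into one sentence.
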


\begin{proof}
  The first statement is witnessed by the map taking $x \in A$ to the constant map $i \mapsto x$.
  The second statement is witnessed by the map taking $f \in A^I$ to $f \circ r \in A^J$,
  where $r : J \to I$ is a retraction, so that it has a right inverse $s : I \to J$.
  To verify the third statement, consider any $f \in A^I$. Since $I$ is projective and $\phi \ileq \psi$ is assumed, there exists $g : I \to B$ such that $\psi(g(i)) \lthen \phi(f(i))$ for all $i \in I$, hence $\param{\psi}{I}(g) \lthen \param{\phi}{I}(f)$.
\end{proof}

A reduction of $\predicate{\phi}{A}$ to $\predicate{\psi}{B}$ which uses a \emph{fixed}
number~$n$ of instances of $\psi$ for every instance of $\phi$ is just instance
reducibility of $\phi$ to $\param{\psi}{[n]}$ where $[n] = \set{1, \ldots, n}$ because
$\phi \ileq \param{\psi}{[n]}$ unfolds to
\begin{equation*}
  \all{x \in A}
  \some{y_1, \ldots, y_n \in B}
  \psi(y_1) \land \cdots \land \psi(y_n)
  \lthen
  \phi(x).
\end{equation*}
This is not to be confused with the reduction of $\phi(x)$ to a \emph{variable} finite
number~$n$ of instances $\psi(y_1), \ldots, \psi(y_n)$, which is expressed as
\begin{equation*}
  \phi \ileq \ibigsup{n \in \NN} \param{\psi}{[n]},
\end{equation*}
as it unfolds to
\begin{equation*}
  \all{x \in A}
  \some{n \in \NN}
  \some{y_1, \ldots, y_n \in B}
  \psi(y_1) \land \cdots \land \psi(y_n)
  \lthen
  \phi(x).
\end{equation*}
A third possibility is $\phi \ileq \param{\psi}{\NN}$ which reduces an instance $\phi(x)$ to
countably many instances $\psi(y_0), \psi(y_1), \psi(y_2), \ldots$

In the Smyth preorder on $\pow{\Omega}$ the $I$-parameterization is easily seen to amount to passing to $I$-indexed intersections: the $I$-parameterization of $S \in \pow{\Omega}$ is
\begin{equation*}
  \param{S}{I} = \set{ p \in \Omega \mid \some{f \in A^I} p \liff \all{i \in I} f(i)}.
\end{equation*}

\subsection{Embedding of truth values}%
\label{sec:truth-values}

Given any set $A$ let $\true{A}$ be the always true predicate on~$A$, and let
$\one = \set{\star}$ be the singleton set containing the element~$\star$. For each truth
value $p \in \Omega$ define its \defemph{extent} $\extent{p} = \set{x \in \one \such p}$.

\begin{prop}%
  \label{prop:omega-monotone}
  The principal ideal of instance degrees below $\true{\one}$ is equivalent to the
  poset~$\Omega$ of truth values, ordered by implication~$\lthen$.
\end{prop}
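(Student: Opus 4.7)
The plan is to exhibit the equivalence via a map $\iota$ from $\Omega$ to the principal ideal below $\true{\one}$ that sends $p \in \Omega$ to the instance degree of $\true{\extent{p}}$, the always-true predicate on $\extent{p} = \set{\star \in \one \such p}$. The unique map $\extent{p} \to \one$ witnesses $\true{\extent{p}} \ileq \true{\one}$, so $\iota$ does land in the principal ideal. My strategy is to verify that $\iota$ is monotone, order-reflecting, and essentially surjective, which by the characterization of equivalent preorders given in \autoref{sec:inst-degr-smyth} suffices.

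For monotonicity and order-reflection, unfolding $\true{\extent{p}} \ileq \true{\extent{q}}$ yields
\begin{equation*}
  \all{x \in \extent{p}} \some{y \in \extent{q}} \top \lthen \top,
\end{equation*}
which simplifies to $(\some{x} x \in \extent{p}) \lthen (\some{y} y \in \extent{q})$, i.e., $p \lthen q$. So this reduction holds if and only if $p \lthen q$.

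For essential surjectivity, take any $\predicate{\phi}{A}$ with $\phi \ileq \true{\one}$; unfolding the reduction shows $\all{x \in A} \phi(x)$. Define $p = \some{x \in A} \phi(x)$. Then I claim $\phi \iequiv \true{\extent{p}}$: for $\phi \ileq \true{\extent{p}}$, given any $x \in A$ the truth of $\phi(x)$ implies $p$, so $\extent{p}$ is inhabited, and any $y \in \extent{p}$ satisfies $\top \lthen \phi(x)$ trivially; for $\true{\extent{p}} \ileq \phi$, any $y \in \extent{p}$ makes $p$ true, providing some $x \in A$, and $\phi(x) \lthen \top$ is immediate.

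I do not anticipate a real obstacle: the proposition is essentially the standard isomorphism $\pow{\one} \cong \Omega$ transported along \autoref{thm:smyth-equiv}, since $\true{\one}$ corresponds to $\set{\top} \in \pow{\Omega}$ and subsets of a singleton are classified by truth values.
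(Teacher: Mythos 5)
Your proof is correct and takes essentially the same route as the paper's: the paper uses the same correspondence $p \mapsto \true{\extent{p}}$ with inverse $\true{A} \mapsto (\some{x \in A} \top)$ and verifies the two round-trips, whereas you verify that the single forward map is monotone, order-reflecting, and essentially surjective; these are equivalent ways to establish the same equivalence of preorders, and your choice of $p = \some{x \in A} \phi(x)$ matches the paper's $\some{x \in A} \top$ once one knows $\all{x \in A} \phi(x)$.
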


\begin{proof}
  First observe that, for a predicate $\predicate{\phi}{A}$, the reduction
  $\phi \ileq \true{\one}$ is equivalent to $\all{x \in A} \phi(x)$, therefore the
  predicates below $\true{\one}$ are precisely those of the form $\true{A}$.

  The desired equivalence of posets is established by the maps
  \begin{equation*}
    p \mapsto \true{\extent{p}}
    \qquad\text{and}\qquad
    \true{A} \mapsto (\some{x \in A} \top)
  \end{equation*}
  of truth values to instance degrees, and vice versa. Indeed, the maps are monotone with
  respect to $\lthen$ and $\ileq$, and it is easy to check that
  \begin{equation*}
    p \liff (\some{x \in \extent{p}} \top)
    \qquad\text{and}\qquad
    \true{\extent{\some{x \in A} \top}} \iequiv \true{A}.
    \qedhere
  \end{equation*}
\end{proof}

\medskip

There is another embedding of truth values into instance degrees, which is an anti-monotone
semilattice homomorphism.

\begin{prop}%
  \label{prop:omega-antimonotone}
  The map which takes a truth value $p \in \Omega$ to the predicate
  $\predicate{\truthdeg{p}}{\one}$, defined by $\truthdeg{p}(x) = p$, is an
  anti-monotone embedding of~$\Omega$ into instance degrees. It satisfies,
  for all $p, q \in \Omega$ and predicates $\predicate{\phi}{A}$:
  \begin{align*}
    \truthdeg{\top}                  &\;\iequiv\;  \true{\one}, \\
    \truthdeg{\bot}                  &\;\iequiv\;  \itop, \\
    \truthdeg{(p \lor q)}            &\;\iequiv\;  \truthdeg{p} \iinf \truthdeg{q}, \\
    \truthdeg{(\some{x \in A} \phi(x))} &\;\iequiv\;  \ibiginf{x \in A} \truthdeg{\phi(x)}.
  \end{align*}
\end{prop}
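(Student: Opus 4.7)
The plan is to verify the claims directly from the definitions, working through each bullet point in order after first establishing that the map is an anti-monotone embedding.

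\medskip

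For the embedding property, I would unfold $\truthdeg{q} \ileq \truthdeg{p}$ to $\all{x \in \one} \some{y \in \one} (p \lthen q)$, which after simplification is just $p \lthen q$. The forward direction gives anti-monotonicity and the backward direction (take $x = y = \star$) gives faithfulness, so the map is an anti-monotone order embedding of $\Omega$ into instance degrees.

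\medskip

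Next I would dispatch the first two identities. The predicate $\truthdeg{\top}$ on $\one$ is by definition the always-true predicate, so $\truthdeg{\top} \iequiv \true{\one}$ holds on the nose. For $\truthdeg{\bot} \iequiv \itop$, the element $\star$ is a counter-example to $\truthdeg{\bot}$, so by the remark in the proof of \autoref{prop:ileq-lattice} this predicate represents the top degree $\itop$.

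\medskip

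For $\truthdeg{(p \lor q)} \iequiv \truthdeg{p} \iinf \truthdeg{q}$, I would observe that, by definition of $\iinf$ from \autoref{prop:ileq-lattice}, the right-hand side is the predicate on $\one \times \one$ whose value everywhere is $p \lor q$. Both reductions are then witnessed by the unique maps $\one \to \one\times\one$ and $\one\times\one \to \one$, since the implication $(p \lor q) \lthen (p \lor q)$ is trivial.

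\medskip

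The main obstacle is the indexed identity $\truthdeg{(\some{x \in A} \phi(x))} \iequiv \ibiginf{x \in A} \truthdeg{\phi(x)}$, where the right-hand side is defined on the product $\prod_{x \in A} \powinh{\one}$. The key observation is that intuitionistically any inhabited subset $S \subseteq \one$ satisfies $S = \one$ (since $\star \in S$ forces $S = \set{x \in \one \such \top}$), so $\powinh{\one}$ has $\one$ as its unique element and the product is inhabited by the constant map $c : x \mapsto \one$. I would then compute, using \eqref{eq:ibiginf},
\begin{equation*}
  (\ibiginf{x \in A} \truthdeg{\phi(x)})(c) \liff
  \some{x \in A} \some{z \in \one} \phi(x) \liff
  \some{x \in A} \phi(x),
\end{equation*}
so the reduction $\truthdeg{(\some{x \in A} \phi(x))} \ileq \ibiginf{x \in A} \truthdeg{\phi(x)}$ is witnessed by the map $\star \mapsto c$. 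For the opposite reduction, any $f \in \prod_{x \in A}\powinh{\one}$ is mapped to $\star$, and if $(\ibiginf{x \in A} \truthdeg{\phi(x)})(f)$ holds then it supplies some $x \in A$ with $\phi(x)$, yielding $\truthdeg{(\some{x\in A}\phi(x))}(\star)$. This completes all four items.
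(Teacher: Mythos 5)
Your proof is correct and takes precisely the route the paper intends (``unfold the definitions''), with the decisive observation being that $\powinh{\one}$ is a singleton, so that $\prod_{x\in A}\powinh{\one}$ collapses to a point and the indexed-infimum formula~\eqref{eq:ibiginf} reduces to $\some{x\in A}\phi(x)$. One small slip in the last step: to witness $\ibiginf{x\in A}\truthdeg{\phi(x)}\ileq\truthdeg{(\some{x\in A}\phi(x))}$ by $f\mapsto\star$ you need the implication $\truthdeg{(\some{x\in A}\phi(x))}(\star)\lthen(\ibiginf{x\in A}\truthdeg{\phi(x)})(f)$, whereas your sentence asserts the converse; since you already computed that the two sides are equivalent for the unique $f=c$, the needed direction holds and this is just a wording slip, not a gap.
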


\begin{proof}
  The map is is an anti-monotone embedding because $\truthdeg{p} \ileq \truthdeg{q}$ is
  equivalent to $q \lthen p$. The four equivalences are easily checked, one just has to
  unfold the definitions.
\end{proof}

We can ask what it would take to also have
\begin{align*}
  \truthdeg{(p \land q)} &\;\iequiv\; \truthdeg{p} \isup \truthdeg{q}, \\
  \truthdeg{(\all{x \in A} \phi(x))} &\;\iequiv\; \ibigsup{x \in A} \truthdeg{\phi(x)}.
\end{align*}
The first equivalence is a special case of the second one, and the second one turns out to
be an instance of the so-called ``Drinker Paradox'', provided that $A$ is inhabited.

\begin{prop}
  For a predicate $\predicate{\phi}{A}$ on an inhabited set~$A$, the equivalence
  \begin{equation*}
    \truthdeg{(\all{x \in A} \phi(x))} \iequiv \ibigsup{x \in A} \truthdeg{\phi(x)}
  \end{equation*}
  holds if, and only if,
  \begin{equation}%
    \label{eq:drinker}
    \some{x \in A} (\phi(x) \lthen \all{y \in A} \phi(y)).
  \end{equation}
\end{prop}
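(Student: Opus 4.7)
My plan is to first simplify the right-hand side, after which the claimed biconditional becomes a routine unfolding of the definition of instance reducibility.

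By \autoref{prop:suprema}, the supremum $\ibigsup{x \in A} \truthdeg{\phi(x)}$ is carried by the coproduct $\coprod_{x \in A} \one$, with value $\phi(j)$ at $\iota_j(\star)$. The canonical surjection $\coprod_{x \in A} \one \epito A$ sending $\iota_j(\star) \mapsto j$ transfers this predicate back to $\phi$ itself, so \autoref{lem:transfer-ileq} gives $\ibigsup{x \in A} \truthdeg{\phi(x)} \iequiv \phi$ (viewed as a predicate on $A$). The proposition therefore reduces to showing that $\truthdeg{(\all{x \in A} \phi(x))} \iequiv \phi$ if and only if \eqref{eq:drinker} holds.

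Writing $q = \all{y \in A} \phi(y)$ for brevity, I would then unfold the two halves of this equivalence separately. The reduction $\phi \ileq \truthdeg{q}$ unfolds to $\all{x \in A} \some{\star \in \one} (q \lthen \phi(x))$, which holds unconditionally since $q \lthen \phi(x)$ is definitional and $\one$ is inhabited, so this direction is free. Dually, $\truthdeg{q} \ileq \phi$ unfolds to $\all{\star \in \one} \some{x \in A} (\phi(x) \lthen q)$, i.e., $\some{x \in A} (\phi(x) \lthen \all{y \in A} \phi(y))$, which is literally \eqref{eq:drinker}. Hence the equivalence of the proposition holds iff the nontrivial direction does, iff \eqref{eq:drinker}.

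There is no substantive obstacle here; the one point that requires a moment of thought is recognizing that the supremum of $\truthdeg{}$'s indexed by $A$ collapses, via the canonical bijection with $\coprod_{x \in A} \one$, to the original predicate $\phi$ on $A$. Once that identification is made, the Drinker Paradox appears directly from the definition of $\ileq$, with the inhabitedness of $A$ being used only mildly to ensure $\one$ is inhabited on the trivially valid side.
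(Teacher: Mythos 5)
Your proof is correct and takes essentially the same route as the paper: the reduction $\ibigsup{x\in A}\truthdeg{\phi(x)} \ileq \truthdeg{(\all{x\in A}\phi(x))}$ holds unconditionally, and the opposite reduction unfolds directly to the Drinker condition~\eqref{eq:drinker}. Your preliminary observation that $\ibigsup{x\in A}\truthdeg{\phi(x)} \iequiv \phi$ via the canonical bijection $\coprod_{x\in A}\one \cong A$ is a tidy organizing step that the paper leaves implicit. One small slip in your closing remark: $\one$ is inhabited independently of any hypothesis on $A$, so inhabitedness of $A$ is not what supplies the witness for $\some{\star\in\one}$ on the trivially valid side; in fact neither your argument nor the paper's actually invokes inhabitedness of~$A$.
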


\begin{proof}
  The reduction
  $\ibigsup{x \in A} \truthdeg{\phi(x)} \ileq \truthdeg{(\all{x \in A} \phi(x))}$ just
  holds. When we unfold the opposite reduction
  $\truthdeg{(\all{x \in A} \phi(x))} \ileq \ibigsup{x \in A} \truthdeg{\phi(x)}$
  we obtain
  \begin{equation*}
    \some{x \in A}
    \some{u \in \one}
    (\phi(x) \lthen \all{y \in A} \phi(y)),
  \end{equation*}
  which is equivalent to~\eqref{eq:drinker}.
\end{proof}

\subsection{\texorpdfstring%
{Excluded middle and the $\lnot\lnot$-dense degrees}%
{Excluded middle and the not-not-dense degrees}%
}%
\label{sec:not-not-dense}

We do not yet possess a method for showing that two given instance degrees actually
disagree. For all we know, the instance degrees might collapse to a very uninteresting
lattice.

\begin{prop}%
  \label{prop:when-trivial}
  The following are equivalent:
  \begin{enumerate}
  \item excluded middle,
  \item every instance degree is equivalent to either $\ibot$, $\true{\one}$, or $\itop$,
  \item instance reducibility is a total order,
  \item every instance degree is either below or above $\true{\one}$.
  \end{enumerate}
\end{prop}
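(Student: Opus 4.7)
My plan is to prove the cycle of implications $(1) \lthen (2) \lthen (3) \lthen (4) \lthen (1)$, of which all but the last are essentially bookkeeping.

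For $(1) \lthen (2)$, I will invoke \autoref{thm:smyth-equiv}. Under excluded middle, $\Omega = \set{\bot, \top}$, so the frame of upper sets $\upp{\Omega}$ has exactly three elements: $\emptyset$, $\set{\top}$, and $\set{\bot, \top}$. These arise as the upper closures of the images of $\ibot$, $\true{\one}$, and $\itop$, respectively, so every instance degree is equivalent to one of these three. The step $(2) \lthen (3)$ follows because $\ibot$ is the least and $\itop$ the greatest degree, so $\ibot \ileq \true{\one} \ileq \itop$, making any two of the three representatives—and hence all degrees—comparable. The step $(3) \lthen (4)$ is immediate: in a total order every element is comparable with the fixed element $\true{\one}$.

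The only substantive step is $(4) \lthen (1)$, where the main obstacle is to choose a test predicate that makes the dichotomy ``below or above $\true{\one}$'' decide an arbitrary proposition. My plan is, given any $p \in \Omega$, to apply (4) to the falsum predicate $\predicate{\phi}{\extent{p}}$ defined by $\phi(x) = \bot$. Unfolding $\true{\one} \ileq \phi$ asks for an element of $\extent{p}$ (because the implication $\phi(y) \lthen \true{\one}(\star)$ is trivial), which is equivalent to $p$; unfolding $\phi \ileq \true{\one}$ asks for $\all{x \in \extent{p}} \phi(x)$, i.e., $\all{x \in \extent{p}} \bot$, which is equivalent to $\lnot p$. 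The dichotomy in (4) therefore yields $p \lor \lnot p$, which is excluded middle.
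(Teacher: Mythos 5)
Your proof is correct and follows the same cycle $(1)\Rightarrow(2)\Rightarrow(3)\Rightarrow(4)\Rightarrow(1)$ as the paper, with an identical key step for $(4)\Rightarrow(1)$: comparing $\true{\one}$ with the false predicate on $\extent{p}$ and reading off $p$ or $\lnot p$ from the two directions of the comparison. The only variation is in $(1)\Rightarrow(2)$, where you appeal to \autoref{thm:smyth-equiv} and count the three upper sets of the two-element $\Omega$, whereas the paper argues directly by cases on whether $A$ is inhabited and whether $\phi$ has a counterexample; the two arguments are equivalent bookkeeping.
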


\begin{proof}
  We prove the cycle of implications in the given order.

  Assume the law of excluded middle and consider any $\predicate{\phi}{A}$. If $A$ is empty then $\phi \iequiv \ibot$. Otherwise, $A$ is inhabited, in which case: if $\phi$ has a counter-example then $\phi \iequiv \itop$, and if it does not then $\phi \iequiv \true{\one}$.

  The second statement implies the third one because $\ibot \ileq \true{\one} \ileq \itop$.

  The fourth claim is an instance of the third one.

  Finally, suppose every instance degree is either below or above~$\true{\one}$.
  To decide $p \in \Omega$, compare $\true{\one}$ with the false predicate on $\extent{p}$.
  If $\true{\one}$ is above it then $p$, and if $\true{\one}$ is below it then $\lnot p$.
\end{proof}

A predicate $\predicate{\phi}{A}$ represents the largest instance degree precisely when it
has a counter-example, i.e., an element $a \in A$ such that $\lnot \phi(a)$. All the other
predicates are know as the \emph{$\lnot\lnot$-dense} ones.

\begin{defi}
  A predicate $\predicate{\phi}{A}$ is \defemph{$\lnot\lnot$-dense} when it has no
  counter-example, i.e., $\lnot \some{x \in A} \lnot \phi(x)$, or equivalently
  $\all{x \in A} \lnot\lnot \phi(x)$. A \defemph{$\lnot\lnot$-dense instance degree} is one
  represented by a $\lnot\lnot$-dense predicate.
\end{defi}

If a predicate does not represent the largest instance degree, then it is $\lnot\lnot$-dense. However, a predicate which is not $\lnot\lnot$-dense need not be the largest instance degree because intuitionistically $\lnot\lnot \some{x \in A} \lnot \phi(x)$ does not generally imply $\some{x \in A} \lnot \phi(x)$.


The $\lnot\lnot$-dense degrees are closely related to the degree of
\defemph{excluded middle}, which is the predicate $\predicate{\LEM}{\Omega}$ defined by $\LEM(p) \equiv (p \lor \lnot p)$. Its degree is the same as that of \defemph{double negation elimination} $\predicate{\DNE}{\Omega}$, defined by $\DNE(p) \equiv (\lnot\lnot p \lthen p)$. Indeed, the usual proofs of implications between the two principles are instance reductions.

\begin{prop}%
  \label{prop:dense-iff-below-lem}%
  An instance degree is $\lnot\lnot$-dense if, and only if, it is reducible to the degree of excluded middle.
\end{prop}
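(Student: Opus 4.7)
The plan is to verify both directions directly from the definitions, using the observation that the ``right'' instance of $\LEM$ to which to reduce $\phi(x)$ is $p := \phi(x)$ itself.

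For the forward direction, I would assume $\phi$ is $\lnot\lnot$-dense, so $\all{x \in A} \lnot\lnot \phi(x)$. Given $x \in A$, I would exhibit the truth value $p = \phi(x) \in \Omega$ as the desired instance of $\LEM$. To verify $(p \lor \lnot p) \lthen \phi(x)$, I would split on the disjunction: the left disjunct $p$ is just $\phi(x)$ itself, while the right disjunct $\lnot p$, i.e., $\lnot \phi(x)$, contradicts $\lnot\lnot\phi(x)$ and hence yields $\phi(x)$ ex falso. This exhibits $\phi \ileq \LEM$.

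For the converse, suppose $\phi \ileq \LEM$. Given any $x \in A$, there is $p \in \Omega$ with $(p \lor \lnot p) \lthen \phi(x)$. Contrapositively, assuming $\lnot\phi(x)$ would force $\lnot(p \lor \lnot p)$, contradicting the intuitionistic tautology $\lnot\lnot(p \lor \lnot p)$. Therefore $\lnot\lnot\phi(x)$ holds for every $x$, so $\phi$ is $\lnot\lnot$-dense.

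There is no substantive obstacle here: the only insight needed is recognising that $p := \phi(x)$ is the canonical $\LEM$-instance to reduce to, after which the intuitionistic double negation of excluded middle does all the work. I might note in passing that the same argument shows $\LEM$ is itself $\lnot\lnot$-dense, and hence that the degree of $\LEM$ is the greatest $\lnot\lnot$-dense instance degree, which is a natural corollary to record alongside the proposition.
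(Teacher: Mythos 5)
Your proof is correct and matches the paper's argument in essence: the key move in both is to take $p := \phi(x)$ as the instance to reduce to, and both converses invoke the falsity of the negation of the chosen instance as an intuitionistic tautology. The only difference is that the paper first switches to the instance-equivalent predicate $\DNE$ and uses that $\lnot(\lnot\lnot p \lthen p)$ is absurd, whereas you work directly with $\LEM$ and use $\lnot\lnot(p \lor \lnot p)$ — your version is arguably a touch more self-contained since it avoids the detour through the $\LEM \iequiv \DNE$ equivalence noted just before the proposition.
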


\begin{proof}
  It is simpler to establish the claim by using~$\DNE$ instead of~$\LEM$.
  If $\predicate{\phi}{A}$ is $\lnot\lnot$-dense then $\phi \ileq \DNE$ is witnessed by the map $A \to \Omega$ taking~$x$ to~$\phi(x)$.
  For the converse, suppose $\predicate{\phi}{A}$ is instance reducible to~$\DNE$. Then for any $x \in A$ there is $p \in \Omega$ such that $(\lnot\lnot p \lthen p) \lthen \phi(x)$, hence $\lnot \phi \lthen \lnot (\lnot\lnot p \lthen p)$, but $\lnot (\lnot\lnot p \lthen p)$ is false.
\end{proof}



%
\begin{figure}[htbp]
  \centering
  \begin{tikzpicture}[scale=3.2]
    \path[draw, fill=black!5]
        (0, -1) .. controls (0.5, -0.75) and (0.5, -0.25) ..
        (0, 0) .. controls (-0.5, -0.25) and (-0.5, -0.75) .. (0, -1) ;
    \node at (0, -0.5) {$\Omega$} ;
    \path[draw, fill=black!5]
        (0, 0) .. controls (0.5, 0.25) and (0.5, 0.75) .. (0, 1)
               .. controls (-0.5, 0.75) and (-0.5, 0.25) .. (0, 0) ;
    \node at (0,0) [above] {$\true{\one}$};
    \node at (0, 0.5) {$\mho$} ;
    \node at (0, -1) {$\bullet$} ; \node at (0, -1) [below] {$\ibot$} ;
    \node at (0, 0) {$\bullet$} ;
    \node at (0, 0.7) {$\bullet$} ;
    \node at (0, 1) {$\bullet$} ; \node at (0,1) [above] {$\itop$} ;
    \node at (0.65, 0.0) [left] {$\lnot\exists\lnot$};
    \path[draw]
        (0, -1) .. controls (0.5, -0.75) and (0.6, -0.25) ..
        (0, 0.7)  node[above] {$\LEM$}
                .. controls (-0.6, -0.25) and (-0.5, -0.75) .. (0, -1) ;
  \end{tikzpicture}
  \caption{The lattice of instance degrees}%
  \label{fig:instances}
\end{figure}
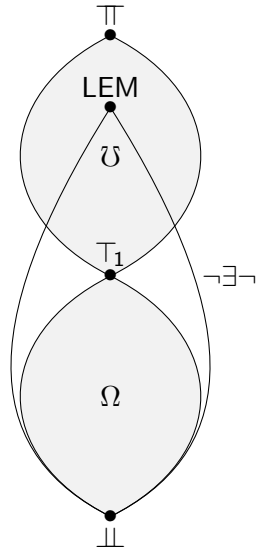

The rudimentary structure of instance degrees is summarized in \autoref{fig:instances}.
The gray areas marked with $\Omega$ and $\mho$ are the monotone and antimonotone embedding of~$\Omega$ from \autoref{prop:omega-monotone} and \autoref{prop:omega-antimonotone}, and the area marked with $\lnot\exists\lnot$ represents the $\lnot\lnot$-dense predicates.
The precise figure depends on what model of intuitionistic mathematics we look at.
In classical set theory, which validates excluded middle, the picture collapses to just the three dots representing $\ibot$, $\true{\one}$ and $\itop$,
while in a properly intuitionistic topos all areas are plentiful.
For example, in the Kleene-Vesley realizability topos, $\Omega$ and $\mho$ are the Medvedev lattice (\autoref{sec:embedd-truth-valu}), the area $\lnot\exists\lnot$ contains the Weihrauch lattice (\autoref{sec:weihr-degr-modest}), and ``Church's thesis'' degree $\CT$ from \autoref{sec:non-trivial-non} resides in $\mho$ but outside $\lnot\exists\lnot$.


\section{Instance reducibility in realizability models}%
\label{sec:inst-degr-rz}

We next work out the interpretation of instance degrees in realizability models. For this purpose we give a brief overview of the relevant concepts and refer to~\cite{oosten08:_realiz} for background material. Henceforth we work in classical mathematics.

\subsection{An overview of realizability models}%
\label{sec:an-overv-real}

A \defemph{partial combinatory algebra (pca)} is a set~$\AA$ equipped with a partial binary operation~$\app$ such that there exist the $\combK, \combS \in \AA$ satisfying, for all $a, b, c \in \AA$,
\begin{equation*}
  \defined{\combK \app a},
  \qquad
  \combK \app a \app b = a,
  \qquad
  \defined{\combS \app a \app b},
  \quad\text{and}\quad
  \combS \app a \app b \app c = (a \app c) \app (b \app c).
\end{equation*}
Application associates to the left, i.e., $a \app b \app c = (a \app b) \app c$.
Above we wrote $\defined{e}$ to indicate that the term~$e$ is defined, and therefore so are all the subterms.
If we use a particular term~$e$ in a statement, we tacitly assume that it is defined.
Partial combinatory algebras are so called because of their combinatorial completeness: given any term~$e$ formed using partial application, elements of~$\AA$ and a variable $x$, we can construct $a \in \AA$ such that, for all $b \in \AA$, if $\defined{e[b/x]}$ then $\defined{a \app b}$ and $a \app b = e[b/x]$, and similarly for several variables. The element~$a$ so constructed is denoted $[x] e$, for example $[x] x$ is the identity combinator which satisfies $([x] x) \app b = b$.
Every pca has a \defemph{pairing} and \defemph{projections} $\mathtt{pair}, \fst, \snd \in \AA$, which satisfy
\begin{equation*}
  \fst \app (\mathtt{pair} \app a \app b) = a
  \quad\text{and}\quad
  \snd \app (\mathtt{pair} \app a \app b) = b.
\end{equation*}
We also write $\pair{a,b}$ for $\mathtt{pair}\,a\,b$. A non-trivial pca has an embedding $\NN \to \AA$ which assigns to every number~$n$ its \defemph{numeral} $\numeral{n}$ in such a way that all partial computable maps are representable in~$\AA$.
Some well-known pcas are:
\begin{enumerate}
\item Kleene's first pca: the natural numbers $\NN$ with application $\{m\} n$ which applies the $m$-th partial computable function to~$n$;
\item Kleene's second pca: the Baire space $\Baire$ with application $\alpha \,{\mid}\, \beta$, as described in~\cite[\S1.4.3]{oosten08:_realiz};
\item the untyped $\lambda$-calculus is a pca whose elements are the closed terms, with $\combK = \lambda x \, y \,.\, x$ and $\combS = \lambda x \, y \, z \,.\, (x\,z)\,(y\,z)$.
\end{enumerate}
An \defemph{elementary sub-pca} is a subset $\effAA \subseteq \AA$ which is closed under application and contains some $\combK$ and $\combS$ suitable for~$\AA$. For example, the computable sequences $\effBaire \subseteq \Baire$ form an elementary sub-pca of Kleene's second pca~\cite[\S2.6.9]{oosten08:_realiz}.

From any pca $\AA$ with an elementary sub-pca $\effAA \subseteq \AA$, the \defemph{relative realizability topos} $\RT{\AA, \effAA}$ can be built~\cite[\S2.6.9]{oosten08:_realiz}. Of special significance is the the topos $\RT{\Baire, \effBaire}$ which embodies \emph{Kleene-Vesley realizability}~\cite{KleeneSC:fouim} as well as \emph{Type 2 computability}~\cite{weihrauch00:_comput_analy}.
Luckily we need not review the construction of relative realizability toposes because the simpler categories of assemblies suffice for our purposes, as they often do.

Suppose $\AA$ is a pca and $\effAA \subseteq \AA$ an elementary sub-pca.
An \defemph{assembly} $S = (|S|, {\rz_S})$ is a set $|S|$ with a \defemph{realizability relation} ${\rz_S} \subseteq \AA \times |S|$, such that $\all{x \in |S|} \some{r \in \AA} r \rz_S x$. When $r \rz_S x$ holds we say that $r$ is a \defemph{realizer} of~$x$.
The realizability relation $\rz_S$ may be transposed in two ways to give equivalent formulations of assemblies.
First, the assembly $S$ may be seen a set $|S|$ equipped with an \defemph{existence predicate} $\mathsf{E}_S : |S| \to \pAA$, defined by $\mathsf{E}_S(x) = \set{r \in \AA \such r \rz_S x}$. Second, the assembly may be seen as a \defemph{multi-valued representation}, which is a partial map $\delta_S : \AA \parto \pow{|S|}$ defined by $\delta_S(r) = \set{x \in |S| \such r \rz_S x}$ on the subset $\set{r \in \AA \mid \some{x \in |S|} r \rz_S x}$. Thus $\mathsf{E}_S(x)$ is always inhabited, and so is $\delta_S(r)$ whenever it is defined.

An assembly $S$ is \defemph{modest} when no two elements share a realizer, i.e., if $r \rz_S x$ and $r \rz_S y$ then $x = y$. Equivalently, $S$ is modest when the representation $\delta_{S}$ is single-valued. An assembly is \defemph{partitioned} when every element has a single realizer, i.e., if $r \rz_S x$ and $s \rz_S x$ then $r = s$. Equivalently, $S$ is partitioned when $\mathsf{E}_S(x)$ is a singleton for all $x \in |S|$.

An \defemph{assembly map} $f : S \to T$ is a map between underlying sets $f : |S| \to |T|$ for which there exists $r \in \effAA$ such that if $s \rz_S x$ then $\defined{r \app s}$ and $r \app s \rz_T f(x)$. We say that $r$ \defemph{tracks} or \defemph{realizes}~$f$.

Identity maps are tracked, and so are compositions of tracked maps. Therefore
assemblies and assembly maps form a category $\Asm{\AA, \effAA}$, which turns out to be a full subcategory of the realizability topos~$\RT{\AA, \effAA}$.
It is a large category because there is an embedding $\nabla : \Set \to \Asm{\AA, \effAA}$ which maps a set~$X$ to the assembly $\nabla X = (X, {\rz_{\nabla X}})$ whose underlying set is~$|\nabla X| = X$ and the realizability relation is trivial: $r \rz_{\nabla X} x$ holds for all $r \in \AA$.

\begin{exa}%
  \label{ex:asm-N}%
  The assembly $N = (\NN, {\rz_N})$ of natural numbers has numerals as realizers,
  i.e., each $n \in \NN$ is realized by the corresponding numeral~$\numeral{n}$.
  This can be contrasted with the assembly $\nabla \NN$ in which every realizer realizes every number. Consequently, while the identity $\id : \NN \to \NN$ is realized as an assembly map $N \to \nabla \NN$, for instance by $\combK$, every assembly map $f : \nabla \NN \to N$ in the opposite direction is constant.
\end{exa}

\begin{exa}%
  \label{ex:asm-ccc}
  The category of assemblies is cartesian closed.
  The product of $S$ and $T$ is the assembly $S \times T$ given by
  \begin{gather*}
    |S \times T| = |S| \times |T|
    \\
    r \rz_{S \times T} (x, y) \iff
    \text{$\fst \app r \rz_S x$ and $\snd \app r \rz_T y$}.
  \end{gather*}
  Projections and pairing are realized respectively by $\fst$, $\snd$, and $\mathtt{pair}$.
  The exponential assembly~$T^S$ by
  \begin{gather*}
    |T^S| = \{ f : |S| \to |T| \such
                   \some{r \in \AA} \all{x \in |S|} \all{s \in \AA}
                   s \rz_S x \lthen r \app s \rz_T f(x) \}
    \\
    r \rz_{T^S} f \iff
    \all{x \in |S|} \all{s \in \AA} s \rz_S x \lthen r \app s \rz_T f(x),
  \end{gather*}
  where it should be noted that it contains maps realized by the elements of~$\AA$, not~$\effAA$. Evaluation and currying are realized respectively by $[a] (\fst \app a) \app (\snd \app a)$ and $[a] [b] [c] a \app \pair{b, c}$.
\end{exa}

\subsection{The realizability logic}%
\label{sec:realizability-logic}

In order to unravel the realizability interpretation of instance reducibilities, we first need an explicit description of realizability logic. Throughout we work with a pca~$\AA$ and an elementary sub-pca $\effAA \subseteq \AA$.

The realizability logic is embodied by \defemph{realizability predicates}. A realizability predicate on an assembly~$S$ is a map $\phi : |S| \to \pAA$. The set $\Pred{S} = \pAA^{|S|}$ of all such predicates forms a Heyting prealgebra with the preorder relation~$\leq_S$ defined by
\begin{equation*}
  \phi \leq_S \psi
  \iff
  \begin{aligned}[t]
    &\text{there is $r \in \effAA$ such that for all $x \in |S|$ and $s \in \AA$,}\\
    &\text{if $s \rz_S x$ and $t \in \phi(x)$ then $r \app s \app t \in \psi(x)$}.
  \end{aligned}
\end{equation*}
If we think of the elements of $\phi(x)$ as computational witnesses, then $\phi \leq_S \psi$ holds when witnesses of~$\phi(x)$, together with witnesses of~$x$, may be mapped to witnesses of~$\psi(x)$ using a realizer from~$\effAA$. It is customary to write $r \rz \phi(x)$ instead of $r \in \phi(x)$, and read it as ``$r$ realizes~$\phi(x)$''.

The Heyting prealgebra structure on $\Pred{S}$ is as follows, where $\phi, \psi \in \Pred{S}$, $x \in |S|$, and $r \in \AA$:
\begin{enumerate}[align=left]
\item $r \rz \bot(x)$ never,
\item $r \rz \top(x)$ always,
\item $r \rz (\phi \land \psi)(x)$ when $\fst \app r \rz \phi(x)$ and $\snd \app r \rz \psi(x)$,
\item $r \rz (\phi \lor \psi)(x)$ when either $\fst \app r = \numeral{0}$ and $\snd \app r \rz \phi(x)$, or $\fst \app r = \numeral{1}$ and $\snd \app r \rz \psi(x)$,
\item $r \rz (\phi \lthen \psi)(x)$ when for all $p \in \AA$, if $p \rz \phi(x)$ then $r \app p \rz \psi(x)$.
\end{enumerate}
These clauses say, for example, that $\bot(x) = \emptyset$, $\top(x) = \AA$, and that implication is given by
\begin{equation*}
   (\phi \lthen \psi)(x) =
   \set{r \in \AA \such \all{p \in \AA} p \in \phi(x) \lthen \defined{r \app p} \land r \app p \in \psi(x)}.
\end{equation*}
Taking into account that $\lnot \phi$ is an abbreviation for $\phi \lthen \bot$, we may calculate that $\lnot \phi$ and $\lnot \lnot \phi$ are characterized by
\begin{enumerate}[resume,align=left]
\item $r \rz (\lnot \phi)(x)$ when there is no $s \in \AA$ such that $s \rz \phi(x)$, and
\item $r \rz (\lnot\lnot \phi)(x)$ when there is $s \in \AA$ such that $s \rz \phi(x)$.
\end{enumerate}
(The connoisseurs can note that the representation of predicates given here is equivalent to the strict extensional predicates~\cite[\S2.2]{oosten08:_realiz} because extensionality is automatic for assemblies and we ensure strictness by passing to~$r$ a realizer $s \rz_S x$.)

Given a two-place realizability predicate $\rho \in \Pred{S \times T}$, we define realizability predicates~$\forall_S \rho \in \Pred{T}$ and $\exists_S \rho \in \Pred{T}$:
\begin{enumerate}[resume,align=left]
\item $r \rz (\forall_S \rho)(y)$ when for all $s, t, x$, if $s \rz_S x$ and $t \rz_T y$ then
      $r \app s \app t \rz \rho(x, y)$,
\item $r \rz (\exists_S \rho)(y)$ when for all $t$, if $t \rz_T y$ then there is $x \in |S|$ such that $\fst \app (r \app t) \rz_S x$ and $\snd \app (r \app t) \rz \rho(x, y)$.
\end{enumerate}
We prefer to write $\all{x \of S} \rho(x,y)$ and $\some{x \of S} \rho(x,y)$ instead of $(\forall_S \rho)(y)$ and $(\exists_S \rho)(y)$.
Quantification of an $(n+1)$-place predicate $\phi \in \Pred{S \times T_1 \times \cdots T_n}$ is carried out as a quantification of an equivalent two-place predicate on $S \times T$ where $T = T_1 \times \cdots \times T_n$. As a special case $n = 0$ we get quantification of a unary predicate $\phi \in \Pred{S}$:
\begin{enumerate}[resume,align=left]
\item\label{it:forall-unary} $r \rz \all{x \of S} \phi(x)$ when for all $s, x$, if $s \rz_S x$ then $r \app s \rz \phi(x)$,
\item $r \rz \some{x \of S} \phi(x)$ when there is $x \in |S|$ such that $\fst \app r \rz_S x$ and $\snd \app r \rz \phi(x)$.
\end{enumerate}
Finally, equality on $S$ is the realizability relation ${=_S} : |S| \times |S| \to \pAA$ characterized by
\begin{enumerate}[resume,align=left]
\item $r \rz x =_S y$ when $x = y$.
\end{enumerate}

\noindent
The Heyting prealgebra structure, together with quantifiers and equality given above, comprises the realizability interpretation of first-order logic with equality, which is intuitionistically sound in the sense that an intuitionistically provable statement has a realizer in~$\effAA$.

\subsection{Instance degrees in assemblies}%
\label{sec:inst-degr-assembl}

Every instance degree in a realizability topos already appears as an instance degree on an assembly, thanks to \autoref{prop:pa-equivalent-to-projective} and the following observation.

\begin{prop}%
  \label{prop:partitioned-asm-cover}
  The partitioned assemblies are (internally) projective in $\RT{\AA, \effAA}$, and every object of the topos is covered by a partitioned assembly.
\end{prop}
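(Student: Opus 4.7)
The plan is to establish the two claims separately.

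For internal projectivity, I would prove the stronger property that partitioned assemblies satisfy internal choice: for every epi $e \of Y \epito X$ and every morphism $f \of I \times P \to X$, there exists a morphism $g \of I \times P \to Y$ with $e \circ g = f$. Transposing under the adjunction $({-}\times P) \dashv ({-})^P$ shows this implies that $(-)^P$ preserves epis, hence internal projectivity of $P$. Fix a partitioned assembly $P$ and write $\rho(p) \in \AA$ for the unique realizer of $p \in |P|$. The key structural fact is that any realizer of $(i, p) \in |I \times P|$ must have the form $\pair{s, \rho(p)}$ with $s \rz_I i$. Because $e$ is a topos epi, there exists $r \in \effAA$ mapping every realizer of $x \in |X|$ to a realizer of some $y \in |Y|$ with $e(y) = x$. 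Combining this with a tracker $\phi_f \in \effAA$ for $f$, the combinator $[a]\, r \app (\phi_f \app a) \in \effAA$ tracks a lift $g$ whose underlying function is defined pointwise by classical choice in the ambient set theory.

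For the second claim, present an object $X$ by an underlying set $|X|$ with strict extensional equality $=_X \of |X| \times |X| \to \pAA$, and write $\mathsf{E}_X(x) = (x =_X x)$ for the existence predicate. Define the partitioned assembly $P_X$ by
\begin{equation*}
  |P_X| = \set{ (x, r) \such x \in |X|,\; r \in \mathsf{E}_X(x) },
\end{equation*}
in which $(x, r)$ has unique realizer $r$. The projection $q \of P_X \to X$ sending $(x, r) \mapsto x$ is tracked by the identity combinator $[a]a \in \effAA$, and is a regular epi in the topos because every $x \in |X|$ carries at least one existence-witness by definition.

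The main obstacle is the internal-versus-external distinction in the first part. External projectivity, i.e.\ lifting a map $P \to X$ along an epi, follows almost immediately from the unique-realizer property. Internal projectivity demands lifting $I \times P \to X$ for arbitrary $I$, and it is precisely because partitioned-ness forces the $P$-component of any realizer of $(i, p)$ to be $\rho(p)$ that the same combinator construction works uniformly over $I$, with no need to choose realizers for the $I$-component or to refine $I$ by a cover. A final delicate check confirms that the $g$ so produced indeed satisfies $e \circ g = f$ as a topos morphism equality, working at the level of the strict extensional presentation rather than only at the set-theoretic level.
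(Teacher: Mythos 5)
Your second part --- constructing the canonical partitioned cover $P_X \epito X$ from the strict extensional presentation of $X$ --- is correct and is the standard construction. The gap is in the first part. You claim a lift $g \of I \times P \to Y$ exists for \emph{every} object $I$, with no need to refine $I$ by a cover. This fails. Your tracker $[a]\, r \app (\phi_f \app a)$, applied to a realizer $\pair{s, \rho(p)}$ of $(i,p)$, yields a realizer of \emph{some} $y \in |Y|$ over $f(i,p)$, but which $y$ it realizes depends on the $I$-component $s$ as well as on $\rho(p)$. A function $g \of |I \times P| \to |Y|$ must pick a single $y$ for each $(i,p)$ that is realized by \emph{every} output of the tracker as $s$ ranges over all realizers of $i$, and there is no reason such a $y$ exists: if $Y$ is modest, different $s$ push the tracker to realizers of pairwise distinct elements, and classical choice cannot reconcile them. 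Uniqueness of the $P$-component of the realizer does nothing to tame the $I$-component. Indeed, taking $P = \one$, your stronger property would say that every object $I$ is externally projective, which already fails for modest non-partitioned assemblies.

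The repair is precisely the step you declared unnecessary, and it is what makes the two halves of the proposition interlock rather than being independent. To show $(-)^P$ preserves epis, given $h \of Z \to X^P$, transpose to $\tilde h \of Z \times P \to X$ and precompose with the cover $P_Z \epito Z$ from the second part. The product $P_Z \times P$ of two partitioned assemblies is again partitioned, so each element has a \emph{unique} realizer; the pointwise classical-choice lift is then forced to agree with the tracker, giving a lift of $\tilde h$ along a cover of $Z$ and hence, after transposing back, a lift of $h$ along a cover of $Z$. That is exactly what internal projectivity (as opposed to your stronger ``internal choice'' property) demands, and it is the structure of the argument in~\cite[Prop.~3.2.7]{oosten08:_realiz} that the paper cites.
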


\begin{proof}
  The proof of~\cite[Prop.~3.2.7]{oosten08:_realiz} for the effective topos adapts to any relative realizability topos.
\end{proof}

We thus lose nothing by narrowing attention from the realizability topos to the subcategory of assemblies.
Given realizability predicates $\rpredicate{\phi}{S}$ and $\rpredicate{\psi}{T}$ on assemblies~$S$ and~$T$, let us unfold the realizability interpretation of $\phi \ileq \psi$. A realizer $r \in \effAA$ for
\begin{equation*}
  \all{x \of S} \some{y \of T} \psi(y) \lthen \phi(x)
\end{equation*}
operates as follows: for all $s, x$, if $s \rz_S x$ then there is $y \in |T|$ such that $\fst \app (r \app s) \rz_T y$ and whenever $p \rz \psi(y)$ then $\snd \app (r \app s) \app p \rz \phi(x)$.
Such $r$ can be equivalently represented by $\ell_1, \ell_2 \in \effAA$ satisfying: for all $s, x$, if $s \rz_S x$ then $\ell_1 \app s \rz_T y$ for some $y \in |T|$ and whenever $p \rz \psi(y)$ then $\ell_2 \app s \app p \rz \phi(x)$.
Indeed, from $r$ we obtain $\ell_1 = [a] \fst \app (r \app a)$ and $\ell_2 = [a] \snd \app (r \app a)$, whereas from $\ell_1$ and $\ell_2$ we reconstruct $r = [a] \pair{\ell_1 \app a, \ell_2 \app a}$.
We have established the following description of instance reducibility in assemblies.

\begin{prop}%
  \label{prop:ileq-realizability}%
  A realizability predicate $\rpredicate{\phi}{S}$ is instance reducible to $\rpredicate{\psi}{T}$ when
  there exist $\ell_1, \ell_2 \in \effAA$ such that, for all $s \in \AA$ and $x \in |S|$, if $s \rz_S x$ then there is $y \in |T|$ such that $\ell_1 \app s \rz_T y$, and for all $p$, if  $p \rz \psi(y)$ then $\ell_2 \app s \app p \rz \phi(x)$.
\end{prop}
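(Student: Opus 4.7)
The plan is to unfold the realizability semantics of the statement $\all{x \of S} \some{y \of T} \psi(y) \lthen \phi(x)$ one connective at a time, using the clauses of the preceding subsection, and then to decompose the resulting single realizer into two realizers via the pca's pairing machinery. This is a routine repackaging rather than a genuinely new argument.

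First I would apply the universal-quantifier clause: $r \in \effAA$ realizes the whole formula iff, whenever $s \rz_S x$, the application $r \app s$ realizes $\some{y \of T} \psi(y) \lthen \phi(x)$. Unfolding the existential then produces some $y \in |T|$ with $\fst \app (r \app s) \rz_T y$ and with $\snd \app (r \app s)$ realizing $\psi(y) \lthen \phi(x)$. Finally, the implication clause expands this last condition to: whenever $p \rz \psi(y)$, the application $\snd \app (r \app s) \app p$ realizes $\phi(x)$. At this point combinatorial completeness lets me set $\ell_1 = [a]\, \fst \app (r \app a)$ and $\ell_2 = [a][b]\, \snd \app (r \app a) \app b$, both elements of $\effAA$, which yields exactly the data in the statement. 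Conversely, given $\ell_1, \ell_2 \in \effAA$, I reconstruct a suitable $r = [a]\, \pair{\ell_1 \app a, \ell_2 \app a}$, also in $\effAA$, thereby showing that the two presentations carry the same information.

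There is essentially no obstacle; the only thing to watch is that the converted witnesses remain inside the sub-pca~$\effAA$, which is ensured by $\effAA$ being closed under application and containing combinators $\combK, \combS$ suitable for performing $[a]$-abstraction. The substantive derivation has already been carried out in the paragraph immediately preceding the proposition's statement, so the formal proof can simply appeal to that unfolding and to the pairing/projection correspondence between $r$ and $(\ell_1, \ell_2)$.
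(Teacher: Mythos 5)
Your unfolding matches the paper's derivation exactly: the text immediately preceding the proposition carries out the same step-by-step unwinding of $\forall$, $\exists$, and $\lthen$, and then uses $\ell_1 = [a]\,\fst\app(r\app a)$, $\ell_2 = [a]\,\snd\app(r\app a)$ and $r = [a]\,\pair{\ell_1\app a,\, \ell_2\app a}$ to move between the single realizer and the pair, just as you do (your $\ell_2$ is merely an eta-expanded variant). This is the same routine repackaging, and your remark about closure of $\effAA$ under application and abstraction is the right thing to keep an eye on.
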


The preceding proposition may be used to translate the constructive statements of \autoref{sec:inst-reduc} to any realizability model.
To familiarize ourselves with instance degrees in assemblies, we illustrate the method on several examples.

Because the subobject classifier $\Omega$ is not an assembly, the degree of Excluded middle is not directly represented by a realizability predicate on an assembly. However, we may lift Excluded middle along the cover $\nabla (\pAA) \to \Omega$ and calculate it to be the realizability predicate $\rpredicate{\LEM}{\nabla(\pAA)}$ given by
\begin{equation*}
  \LEM(\theta) = \set{ \pair{\numeral{n}, r} \mid (n = 1 \land r \in \theta) \lor (n = 0 \land \theta = \emptyset) }.
\end{equation*}
Thus $\LEM(\emptyset) = \set{ \pair{\numeral{0}, r} \mid r \in \AA }$, while for $\theta \neq \emptyset$ we have $\LEM(\theta) = \set{ \pair{\numeral{1}, r} \mid r \in \theta }$.
Next we characterize the $\lnot\lnot$-dense realizability predicates.

\begin{prop}%
  \label{prop:rz-not-not-dense}
  A realizability predicate $\rpredicate{\phi}{S}$ is $\lnot\lnot$-dense if, and only if, $\phi(x) \neq \emptyset$ for all $x \in |S|$.
\end{prop}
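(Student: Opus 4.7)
The plan is to unfold the realizability interpretation of $\all{x \of S} \lnot\lnot \phi(x)$ using the clauses given in \autoref{sec:realizability-logic} and observe that the resulting condition says exactly that each fiber $\phi(x)$ is inhabited.

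First I would recall that, by clause~(7) of the realizability semantics, $r \rz (\lnot\lnot \phi)(x)$ holds exactly when there exists some $s \in \AA$ with $s \rz \phi(x)$, that is, precisely when $\phi(x) \neq \emptyset$. Crucially, this condition does not depend on the witness~$r$ at all. Then, by clause~\ref{it:forall-unary}, we have $r \rz \all{x \of S} \lnot\lnot \phi(x)$ iff for all $s \in \AA$ and $x \in |S|$ with $s \rz_S x$, $r \app s \rz (\lnot\lnot\phi)(x)$, which, combining with the previous observation, reduces to: $\phi(x) \neq \emptyset$ for every $x \in |S|$ that admits a realizer $s \rz_S x$. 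Since every element of an assembly has at least one realizer, this simplifies further to $\phi(x) \neq \emptyset$ for all $x \in |S|$.

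For the forward direction, assuming $\phi$ is $\lnot\lnot$-dense means some $r \in \effAA$ realizes $\all{x \of S} \lnot\lnot \phi(x)$; by the analysis above this immediately gives $\phi(x) \neq \emptyset$ for every $x \in |S|$. For the backward direction, assuming $\phi(x) \neq \emptyset$ for all $x \in |S|$, we need only exhibit some element of~$\effAA$, such as the identity combinator $[a]\, a$, which trivially realizes the universal statement because the realizer is irrelevant to the truth of $(\lnot\lnot\phi)(x)$.

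There is essentially no obstacle here; the only subtlety worth flagging is the role of $\effAA$ versus $\AA$: the realizer of the $\lnot\lnot$-density statement must come from the elementary sub-pca $\effAA$, but any fixed element of $\effAA$ (which contains $\combK$ and $\combS$, hence is nonempty) suffices, so computability poses no obstruction.
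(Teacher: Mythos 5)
Your proof is correct and takes essentially the same route as the paper's: unfold the realizability clauses for $\forall$ and $\lnot\lnot$, observe that $r' \rz (\lnot\lnot\phi)(x)$ is equivalent to $\phi(x) \neq \emptyset$ independently of $r'$, and exhibit $[a]\,a \in \effAA$ as the realizer for the converse. The only small imprecision is the phrasing "reduces to" in your second paragraph, which suggests an iff at the level of a fixed $r$ when in fact $r \app s$ must also be defined; your third paragraph repairs this by supplying the concrete realizer, matching the paper exactly.
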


\begin{proof}
  Suppose $r \rz \all{x \of S} \lnot\lnot \phi(x)$ and consider any $x \in |S|$. There is $s \in \AA$ such that $s \rz_S x$, hence $r \app s \rz \lnot\lnot \phi(x)$ and so $\phi(x) \neq \emptyset$.
  Conversely, if $\phi(x) \neq \emptyset$ for all $x \in |S|$, then $\lnot\lnot\phi(x) = \AA$ for all $x \in |S|$,
  therefore $[a] a \rz \all{x \of S} \lnot\lnot \phi(x)$.
\end{proof}

The characterization of $\lnot\lnot$-dense degrees from \autoref{prop:dense-iff-below-lem} transfers as follows.

\begin{prop}
  A realizability predicate $\rpredicate{\phi}{S}$ is instance reducible to~$\LEM$, if and only if, $\phi(x) \neq \emptyset$ for all $x \in |S|$.
\end{prop}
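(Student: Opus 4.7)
The plan is to unfold both directions of the equivalence using \autoref{prop:ileq-realizability} together with the explicit description of $\LEM$ on $\nabla(\pAA)$ computed just above; everything else is combinatorial bookkeeping.

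For the ``if'' direction, assume $\phi(x) \neq \emptyset$ for every $x \in |S|$. I would exhibit the reduction by taking $\ell_1 = [s]\,s$ and $\ell_2 = [s][p]\,\snd \app p$, and, for each $s \rz_S x$, picking the witness $\theta := \phi(x) \in \pAA$. Since $\nabla(\pAA)$ has the trivial realizability relation, $\ell_1 \app s$ automatically realizes $\theta$. Because $\theta \neq \emptyset$, every $p \rz \LEM(\theta)$ is forced to have the form $\pair{\numeral{1}, q}$ with $q \in \phi(x)$, so $\ell_2 \app s \app p = \snd \app p = q$ lies in $\phi(x)$ as required.

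For the ``only if'' direction, suppose $\phi \ileq \LEM$ is tracked by some $\ell_1, \ell_2 \in \effAA$. Fix an arbitrary $x \in |S|$ and choose some $s \rz_S x$; the reducibility hands us (externally, in the metatheory) an element $\theta \in \pAA$ such that every $p \rz \LEM(\theta)$ yields $\ell_2 \app s \app p \rz \phi(x)$. A classical case split on $\theta$ then does the work: if $\theta = \emptyset$, then for any $a \in \AA$ the pair $\pair{\numeral{0}, a}$ realizes $\LEM(\theta)$ and therefore produces an element of $\phi(x)$; if $\theta$ is inhabited, pick any $q \in \theta$ and note that $\pair{\numeral{1}, q} \rz \LEM(\theta)$, again producing an element of $\phi(x)$. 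Either way $\phi(x)$ is inhabited.

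The only mildly delicate point is the free use of classical case analysis on $\theta$ in the converse direction, which is legitimate because the whole of \autoref{sec:inst-degr-rz} is declared to be external classical reasoning about the topos. One could alternatively compose \autoref{prop:dense-iff-below-lem} with \autoref{prop:rz-not-not-dense}, but this requires first verifying that lifting $\LEM$ along the cover $\nabla(\pAA) \epito \Omega$ preserves the instance degree (which follows from \autoref{lem:transfer-ileq}, since the cover is surjective); the direct route above is shorter and more illuminating.
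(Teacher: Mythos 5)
Your proof is correct and follows essentially the same route as the paper's: both directions are unwound via \autoref{prop:ileq-realizability} and the explicit description of $\LEM$ on $\nabla(\pAA)$, with the reduction in the ``if'' direction realized by a constant $\ell_1$ (the paper uses $\combK\app\combK$, you use $[s]\,s$, both fine since $\nabla$ trivializes realization) and $\ell_2$ returning the second projection. Your explicit case split on $\theta$ in the converse is just an unpacking of the paper's one-line observation that $\LEM(\theta)\neq\emptyset$ for every $\theta\in\pAA$; no substantive difference.
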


\begin{proof}
  Suppose $\ell_1, \ell_2 \in \effAA$ realize the reduction $\phi \ileq \LEM$, and consider any $x \in |S|$. There is
  $r \in \AA$ such that $r \rz_S x$, hence there is $\theta \in \pAA$ such that $\ell_2 \app r  \app p \in \phi(x)$ for all $p \in \LEM(\theta)$. Now $\phi(x) \neq \emptyset$ because $\LEM(\theta) \neq \emptyset$.

  Conversely, suppose $\phi(x) \neq \emptyset$ for all $x \in |S|$. We claim that $\ell_1 = \combK \app \combK$ and $\ell_2 = \combK \app \snd$ realize $\phi \ileq \LEM$. For suppose $x \in |S|$ and $r \rz_S x$. Then $\LEM(\phi(x)) = \set{ \pair{\numeral{1}, p} \mid p \in \phi(x)}$ because $\phi(x) \neq \emptyset$ and consequently $\ell_2 \app s \app \pair{\numeral{1}, p} = p \in \phi(x)$, as required.
\end{proof}

Let us unravel the Heyting implication $\phi \ithen \psi$ of realizability predicates $\rpredicate{\phi}{S}$ and $\rpredicate{\psi}{T}$. The underlying assembly $S \ithen T$ is defined by
\begin{gather*}
  |S \ithen T| =
  \set{ (r, \theta) \in \AA \times \pow{A} \such
        r \rz \all{x \of S} \some{y \of T} \psi(y) \lthen \phi(x) \lor \theta
  },\\
  s \rz_{S \ithen T} (r, \theta) \iff s = r
\end{gather*}
and the predicate $(\phi \ithen \psi) : (S \ithen T) \to \pAA$ by
$
  (\phi \ithen \psi) (r, \theta) = \theta
$.
This is hardly an illuminating description. Hopefully, in the future there will be a better one, as well as some interesting applications of Heyting implication of instance degrees.

Suprema and infima of families of realizability predicates must be computed with proper attention to realizers. A family of assemblies ${(S_i)}_{i \in I}$ indexed by an assembly~$I$ is just an assignment of an assembly $S_i$ to each index $i \in |I|$. Similarly, a family of realizability predicates $\rpredicate{\phi_i}{S_i}$ indexed by $I$ assigns a map $\phi_i : |S_i| \to \pAA$ to each $i \in |I|$. The supremum of such a family is the realizability predicate $\ibigsup{} \phi : \coprod S \to \pAA$, where
\begin{gather*}
  \textstyle
  \coprod S = \set{ (i, x) \such i \in |I| \land x \in |S_i|},
  \\
  r \rz_{\coprod S} (i, x)
  \iff \text{$\fst \app r \rz_I i$ and $\snd \app r \rz_S x$},
\end{gather*}
is the coproduct of the family, and $\ibigsup{} \phi$ is defined by $(\ibigsup{} \phi) (i, x) = \phi_i(x)$.
The point here is that by using the coproduct $\coprod S$, an instance reduction from the supremum receives not only a realizer $x \in |S_i|$ but also one for $i \in |I|$.

\subsection{Extended Weihrauch degrees}%
\label{sec:extend-weihr-degr}

Because the category of assemblies is large, the instance degrees form a proper class. Here is an equivalent description of instance degrees which forms a set-sized preorder.

\begin{defi}
   An \defemph{extended Weihrauch predicate} is a map $f : \AA \to \ppAA$.
   Its \defemph{support} is the set $\support{f} = \set{r \in \AA \such f(r) \neq \emptyset}$.
   We say that~$f : \AA \to \ppAA$ is \defemph{Weihrauch reducible} to $g : \AA \to \ppAA$, written $f \Wleq g$, when there exist $\ell_1, \ell_2 \in \effAA$ such that for all $r \in \support{f}$:
   \begin{enumerate}
   \item $\defined{\ell_1 \app r}$ and $\ell_1 \app r \in \support{g}$,
   \item\label{it:ext-weih-l2} for every $\theta \in f(r)$ there is $\xi \in g(\ell_1 \app r)$ such that $\ell_2 \app r \rz \xi \lthen \theta$.
   \end{enumerate}
\end{defi}

\noindent
(Recall from~\autoref{sec:inst-degr-rz} that $\ell_2 \app r \rz \xi \lthen \theta$ means: if $s \in \xi$ then $\defined{\ell_2 \app r \app s}$ and $\ell_2 \app r \app s \in \theta$.)

Condition~\eqref{it:ext-weih-l2} above may look unusual to readers who are already familiar with Weihrauch reducibilities, for it looks as if the dependence between~$f$ and~$g$ is reversed. This is not the case, however, since $\ell_2 \cdot r$ still reduces $\xi \in g(\ell_1 \cdot r)$ to $\theta \in f(r)$. Also, the first part of the condition, stating that every~$\theta \in f(r)$ has some~$\xi \in g(\ell \cdot r)$, imposes no computability condition on how~$\xi$ is to be found --- it just has to exist --- which is precisely the novelty introduced by the extended degrees.
It may help to note that the embedding of ordinary Weihrauch degrees into the extended ones, see \autoref{prop:ordinary-embed-extended} below, embeds an ordinary degree in such a way that $f(r)$ and $g(\ell_1 \cdot r)$ are singletons, which collapses the first part of~\eqref{it:ext-weih-l2} so that we are left just with the familiar condition.

Weihrauch reductions evidently form a preorder, therefore its symmetrization $\Wequiv$ forms an equivalence relation, whose classes we call \defemph{extended Weihrauch degrees}.

\begin{prop}%
  \label{prop:ext-weih-equivalent-instance}
  Weihrauch reductions and instance reductions in~$\Asm{\AA, \effAA}$ are equivalent preorders.
\end{prop}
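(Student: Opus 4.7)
The plan is to exhibit the two preorders as equivalent by constructing mutually inverse (up to symmetrization) monotone maps between them. The guiding intuition is that an extended Weihrauch predicate $f : \AA \to \ppAA$ is just a repackaging of an assembly $S$ together with a realizability predicate $\rpredicate{\phi}{S}$: at each realizer $r \in \AA$, one collects the values $\phi(x)$ for all $x$ that $r$ realizes.

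Concretely, I define $\Phi$ from realizability predicates on assemblies to extended Weihrauch predicates by
$$\Phi(\phi, S)(r) = \set{\phi(x) \such x \in |S| \land r \rz_S x},$$
so that $\support{\Phi(\phi, S)}$ is the union of all existence sets of $S$. In the opposite direction, $\Psi$ sends $f : \AA \to \ppAA$ to the predicate $\phi_f$ on the partitioned assembly $S_f$ given by
$$|S_f| = \set{(r, \theta) \such r \in \support{f} \land \theta \in f(r)}, \qquad s \rz_{S_f} (r, \theta) \iff s = r,$$
with $\phi_f(r, \theta) = \theta$. A direct calculation gives $\Phi(\Psi(f)) = f$ on the nose. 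In the other direction, $\Psi(\Phi(\phi, S))$ is the assembly of pairs $(r, \phi(x))$ for $r \rz_S x$, and the equivalence $(\phi, S) \iequiv \Psi(\Phi(\phi, S))$ is witnessed in both directions by the trivial realizers $\ell_1 = [a]\,a$ and $\ell_2 = [a][b]\,b$; the reverse direction uses classical choice to pick, for each $(r, \theta) \in |S_f|$, some $x \in |S|$ with $r \rz_S x$ and $\phi(x) = \theta$.

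Monotonicity of $\Phi$ (and, symmetrically, of $\Psi$) then follows by matching \autoref{prop:ileq-realizability} clause-by-clause against the definition of $\Wleq$. Given $\ell_1, \ell_2 \in \effAA$ witnessing $(\phi, S) \ileq (\psi, T)$ and any $\theta \in \Phi(\phi, S)(r)$, write $\theta = \phi(x)$ for some $x$ with $r \rz_S x$; the instance reduction produces $y \in |T|$ with $\ell_1 \app r \rz_T y$ and a tracked implication $\psi(y) \lthen \phi(x)$ realized by $\ell_2 \app r$, so $\xi := \psi(y) \in \Phi(\psi, T)(\ell_1 \app r)$ satisfies the Weihrauch condition using the same $\ell_2$.

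The subtle point to check is the quantifier alignment: the instance reduction is indexed by pairs $(s, x)$ with $s \rz_S x$, while the Weihrauch reduction is indexed by $r \in \support{f}$ and then separately by $\theta \in f(r)$. Since the same $r$ may realize several distinct $x$'s with possibly distinct values $\phi(x)$, the Weihrauch formulation must keep the existential over $\theta$; and the absence of any computability requirement on the assignment $\theta \mapsto \xi$ is precisely what matches the fact that, in the instance reduction, the choice of $y$ depends on $x$ and not merely on $s$. This is the only place where the correspondence is not entirely mechanical, but once it is noticed, both directions of the monotonicity argument become routine unpacking of definitions.
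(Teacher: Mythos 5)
Your proof is correct and follows essentially the same route as the paper: you define the same pair of maps (your $\Phi$ and $\Psi$ are literally the paper's $\mathbf{f}_{(\phi,S)}$ and $(\pmb{\phi}_f, \mathbf{S}_f)$), verify the same strict identity $\Phi \circ \Psi = \mathrm{id}$, witness $(\phi,S) \iequiv \Psi(\Phi(\phi,S))$ by the same identity-like realizers, and prove monotonicity by the same realizer-preserving correspondence. One small remark: the appeal to ``classical choice'' in the reverse direction is superfluous, since Proposition~\ref{prop:ileq-realizability} only asks for the existence of a suitable $x$ given $(r,\theta)$, not for a choice function.
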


\begin{proof}
  In one direction the equivalence maps a realizability predicate $\rpredicate{\phi}{S}$ to the extended Weihrauch predicate $\mathbf{f}_{(\phi,S)} : \AA \to \ppAA$ given by
  \begin{equation*}
    \mathbf{f}_{(\phi,S)} (r) =
    \set{ \theta \in \pAA \mid \text{there is $x \in |S|$ such that $r \rz_S x$ and $\theta = \phi(x)$} }.
  \end{equation*}
  Let us verify that~$\mathbf{f}$ is monotone.
  If $\ell_1, \ell_2 \in \effAA$ witness $(\phi,S) \ileq (\psi, T)$, as in \autoref{prop:ileq-realizability}, then they witness $\mathbf{f}_{(\phi,S)} \Wleq \mathbf{f}_{(\psi,T)}$ too.
  First, if $r \in \support{\mathbf{f}_{(\phi,S)}}$ then $r \rz_S x$ for some $x \in |S|$, hence
  there is $y \in |T|$ such that $\ell_1 \app r \rz_T y$, from which we conclude $\ell_1 \app r \in \support{\mathbf{f}_{(\psi,T)}}$.
  Second, for any $\theta \in \mathbf{f}_{(\phi,S)}(r)$ there is $x \in |S|$ such that $r \rz_S x$ and $\theta = \phi(x)$, so we may take $\xi = \psi(y)$ to satisfy $\ell_2 \app r \rz \xi \lthen \theta$.

  \newcommand{\bphi}{\pmb{\phi}}%
  In the opposite direction the equivalence takes an extended Weihrauch predicate $f : \AA \to \ppAA$ to the realizability predicate $\rpredicate{\bphi_f}{\mathbf{S}_f}$ where
  \begin{gather*}
    |\mathbf{S}_f| = \set{ (r, \theta) \in \AA \times \pAA \mid \theta \in f(r) },\\
    s \rz_{\mathbf{S}_f} (r, \theta) \iff s = r,\\
    \bphi_f (r, \theta) = \theta.
  \end{gather*}
  Again, we must establish monotonicity of $(\bphi, \mathbf{S})$. If $\ell_1, \ell_2 \in \effAA$ witness $f \Wleq g$ then they also witness $(\bphi_f, \mathbf{S}_f) \ileq (\bphi_g, \mathbf{S}_g)$. First, if $s \rz_{\mathbf{S}_f} (r, \theta)$ then $s = r$ and $\theta \in f(r)$, therefore $r \in \support{f}$ and $\ell_1 \app r \in \support{g}$. Second, there is $\xi \in g(\ell_1 \app r)$ such that $\ell_2 \app r \rz \xi \lthen \theta$, hence $(\ell_1 \app r, \xi) \in |\mathbf{S}_g|$ and if $p \in \xi$ then $\ell_2 \app r \app p \in \theta$, as required.

  It remains to be checked that $\mathbf{f}$ and $(\bphi, \mathbf{S})$ form an equivalence. Unfolding of definitions reveals that $\mathbf{f}_{(\mathbf{S}_f, \bphi_f)} = f$ for all $f : \AA \to \ppAA$.
  To see that $(\bphi_{\mathbf{f}_{(\phi,S)}}, \mathbf{S}_{\mathbf{f}_{(\phi,S)}}) \iequiv (\phi, S)$ for any $\rpredicate{\phi}{S}$, observe that
  \begin{gather*}
    |\mathbf{S}_{\mathbf{f}_{(\phi,S)}}| =
       \set{(r, \theta) \in \AA \times \pAA \mid
          \text{there is $x \in |S|$ such that $r \rz_S x$ and $\theta = \phi(x)$}
       }, \\
    s \rz_{\mathbf{S}_{\mathbf{f}_{(\phi,S)}}} (r, \theta) \iff s = r.
  \end{gather*}
  Now it is clear that both $(\bphi_{\mathbf{f}_{(\phi,S)}}, \mathbf{S}_{\mathbf{f}_{(\phi,S)}}) \ileq (\phi, S)$ and $(\phi, S) \ileq (\bphi_{\mathbf{f}_{(\phi,S)}}, \mathbf{S}_{\mathbf{f}_{(\phi,S)}})$ are realized by $\ell_1 = [a] a$ and $\ell_2 = [a] [b] b$.
\end{proof}

Various properties of realizability predicates may be transferred to extended Weihrauch degrees via the equivalence given in the proof of \autoref{prop:ext-weih-equivalent-instance}. For example, if $\phi \subseteq S$ is a realizability predicate on a modest assembly then the corresponding extended Weihrauch degree $\mathbf{f}_{(\phi, S)}$ satisfies
\begin{equation*}
  \mathbf{f}_{(\phi, S)} (r) =
  \begin{cases}
    \set{\phi(x)} & \text{if $r \rz_S x$}, \\
    \emptyset & \text{if $r$ does not realize any $x \in |S|$.}
  \end{cases}
\end{equation*}
Thus we define a \defemph{modest extended Weihrauch predicate} to be a map $f : \AA \to \ppAA$ such that $f(r)$ has at most one element for every $r \in \AA$. In a similar fashion the characterization of $\lnot\lnot$-density given in \autoref{prop:rz-not-not-dense} prompts us to define a \defemph{$\lnot\lnot$-dense extended Weihrauch predicate} to be a map $f : \AA \to \ppAA$ such that $\theta \neq \emptyset$ for all $r \in \AA$ and $\theta \in f(r)$.

\subsection{Weihrauch degrees as a sublattice of instance degrees}%
\label{sec:weihr-degr-modest}

Weihrauch degrees~\cite{brattka_gherardi_2011} are defined in the context of Type~2 computability~\cite{weihrauch00:_comput_analy}.
We first generalize them in a straightforward manner to any pca~$\AA$ with an elementary sub-pca $\effAA \subseteq \AA$.

An \defemph{(ordinary) Weihrauch predicate} is a relation $U \subseteq \AA \times \AA$, whose \defemph{support} is $\support{U} = \set{r \in \AA \such \some{s \in \AA} (r, s) \in U}$.
By writing $U[r] = \set{s \in \AA \such (r, s) \in U}$ we construe~$U$ as a multi-valued map from $\support{U}$ to~$\AA$.
A \defemph{Weihrauch reduction} $U \wleq V$ from $V \subseteq \AA \times \AA$ is given by $\ell_1, \ell_2 \in \effAA$ such that, for all $r \in \support{U}$,
\begin{enumerate}
\item $\defined{\ell_1 \app r}$ and $\ell_1 \app r \in \support{V}$, and
\item for all $s \in V[\ell_1 \app r]$ we have $\defined{\ell_2 \app r \app s}$ and $\ell_2 \app r \app s \in U[r]$.
\end{enumerate}
Once again, $\wleq$ is a preorder whose symmetrization $\wequiv$ is an equivalence relation. Its classes are the \defemph{(ordinary) Weihrauch degrees}.

\begin{prop}%
  \label{prop:ordinary-embed-extended}
  Ordinary Weihrauch reductions form a preorder that is equivalent to extended Weihrauch reductions on the $\lnot\lnot$-dense modest extended Weihrauch predicates.
\end{prop}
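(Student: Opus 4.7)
The plan is to set up an explicit bijective correspondence between ordinary Weihrauch predicates and modest $\lnot\lnot$-dense extended Weihrauch predicates, and then verify that it preserves and reflects the reducibility relation, so that the two preorders are in fact isomorphic (which in particular makes them equivalent as claimed).

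In the forward direction I would send an ordinary Weihrauch predicate $U \subseteq \AA \times \AA$ to the extended predicate $\hat{U} : \AA \to \ppAA$ defined by $\hat{U}(r) = \set{U[r]}$ for $r \in \support{U}$ and $\hat{U}(r) = \emptyset$ otherwise. Because $U[r]$ is inhabited precisely when $r \in \support{U}$, the predicate $\hat{U}$ is both modest (each $\hat{U}(r)$ has at most one element) and $\lnot\lnot$-dense (the sole element, when present, is inhabited), and $\support{\hat{U}} = \support{U}$. In the backward direction, given a modest $\lnot\lnot$-dense $f : \AA \to \ppAA$, I define $\check{f} \subseteq \AA \times \AA$ by declaring $(r,s) \in \check{f}$ iff $s \in \theta$ for the (unique, inhabited) $\theta$ such that $f(r) = \set{\theta}$, so $\check{f}[r] = \theta$ when $f(r) = \set{\theta}$ and $\check{f}[r] = \emptyset$ otherwise; then $\support{\check{f}} = \support{f}$. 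The round-trips $\check{\hat{U}} = U$ and $\hat{\check{f}} = f$ are then immediate from unfolding the definitions.

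The monotonicity step is where I would match the two reducibility relations. Given $\ell_1,\ell_2 \in \effAA$ witnessing $U \wleq V$, I would show that the same pair witnesses $\hat{U} \Wleq \hat{V}$: for any $r \in \support{\hat{U}} = \support{U}$, the element $\ell_1 \app r$ lies in $\support{V} = \support{\hat{V}}$; and the unique $\theta \in \hat{U}(r)$ is $U[r]$ while the unique $\xi \in \hat{V}(\ell_1 \app r)$ is $V[\ell_1 \app r]$, so the condition $\ell_2 \app r \rz \xi \lthen \theta$ reduces exactly to ``$\ell_2 \app r \app s \in U[r]$ whenever $s \in V[\ell_1 \app r]$'', which is the second clause of the ordinary Weihrauch reduction. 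Conversely, if $\ell_1,\ell_2$ realize $f \Wleq g$ between modest $\lnot\lnot$-dense predicates, the same pair realizes $\check{f} \wleq \check{g}$, because the unique $\theta \in f(r)$ and $\xi \in g(\ell_1 \app r)$ determine precisely the sets $\check{f}[r]$ and $\check{g}[\ell_1 \app r]$.

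Assembling these pieces gives a bijection between the two classes of predicates which is monotone in both directions and has monotone inverse, hence an isomorphism, and in particular an equivalence, of preorders. There is no genuine obstacle: the argument is really only a dictionary between two ways of packaging the same data, and the only conceptual point to highlight is that modesty is what makes $f(r)$ behave like a single set $U[r]$, while $\lnot\lnot$-density is what ensures this set is inhabited exactly on $\support{U}$, so that the two support conditions agree.
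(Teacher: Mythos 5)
Your proof is correct and matches the paper's argument essentially step for step: the same forward map $U \mapsto \extw{U}$ with $\extw{U}(r) = \{U[r]\}$ on the support, the same inverse using modesty to extract the unique $\theta$ from $f(r)$, and the same observation that identical realizers $\ell_1,\ell_2$ witness the reduction on both sides. The paper likewise notes the two maps are mutually inverse, so nothing is lost by your stating the stronger conclusion that the preorders are isomorphic.
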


\begin{proof}
  To each Weihrauch predicate $U \subseteq \AA \times \AA$ we associate the extended Weihrauch predicate $\extw{U} : \AA \to \ppAA$, defined by
  \begin{equation*}
    \extw{U}(r) =
    \begin{cases}
      \set{U[r]} & \text{if $r \in \support{U}$,}\\
      \emptyset  & \text{otherwise}.
    \end{cases}
  \end{equation*}
  It is clear that $\extw{U}$ is both $\lnot\lnot$-dense and modest.
  The assignment is monotone, for if $U \wleq V$ is witnessed by $\ell_1, \ell_2 \in \effAA$ then $\extw{U} \Wleq \extw{V}$ is witnessed by $\ell_1$ and $\ell_2$ as well.

  For the opposite direction, suppose $\phi : \AA \to \ppAA$ is an extended Weihrauch predicate which is $\lnot\lnot$-dense and modest, so that there is a unique map $u : \support{\phi} \to \pAA \setminus \set{\emptyset}$ such that $\phi(r) = \set{u(r)}$ for all $r \in \support{\phi}$. Let $U_\phi \subseteq \AA \times \AA$ be the Weihrauch predicate characterized by $\support{U_\phi} = \support{\phi}$ and $U_\phi[r] = u(r)$, i.e.,
  \begin{equation*}
    U_\phi = \set{ (r, s) \in \AA \times \AA \such \some{\theta \in \phi(r)} s \in \theta }.
  \end{equation*}
  Once again it is easy to see that if $\ell_1, \ell_2 \in \effAA$ witness $\phi \Wleq \psi$ then they also witness $U_\phi \wleq U_\psi$.
  The maps $U \mapsto \extw{U}$ and $\phi \mapsto U_\phi$ are monotone and inverses of each other, therefore they constitute an equivalence of preorders.
\end{proof}


\section{Examples}%
\label{sec:examples}

We make a cursory exploration of the structure of instance degrees in realizability models by way of several examples, and leave a more serious analysis for a future time.

\subsection{\texorpdfstring%
{Non-trivial non-$\lnot\lnot$-degrees}%
{Non-trivial non-not-not-degrees}%
}%
\label{sec:non-trivial-non}

When $\effAA = \AA$ the only degree which is not $\lnot\lnot$-dense is the top degree. Indeed, if $\rpredicate{\phi}{S}$ is not $\lnot\lnot$-dense then $\phi(a) = \emptyset$ for some $a \in |S|$, and there is $r \in \AA$ such that $r \rz_S a$.
Crucially, $r \in \effAA$ allows us to use $\pair{r, \combK} \in \effAA$ as a realizer for the statement $\some{x \of S} \lnot \phi(x)$, which claims that~$\phi$ has a counter-example. Therefore $\phi$ represents the top degree.
The same trick does not work when $\effAA \neq \AA$, for we have no guarantee that there is $r \in \AA'$ such that $r \rz_S a$.

The previous observation inspires the following example. Recall from \autoref{ex:asm-N} and \autoref{ex:asm-ccc} that the exponential assembly $N^N = F = (|F|, {\rz_F})$ is the set of functions $\NN \to \NN$ that are realized by elements of~$\AA$:
\begin{gather*}
  |F| = \set{f : \NN \to \NN \such \some{r \in \AA} \all{n \in \NN} r \app \numeral{n} = \numeral{f(n)} },\\
  r \rz_F f \iff \all{n \in \NN} r \app \numeral{n} = \numeral{f(n)}.
\end{gather*}
Let $\varphi$ be a standard enumeration of partial computable maps, and
define the realizability predicate $\rpredicate{\CT}{F}$ by
\begin{equation*}
  \CT(f) = \set{\overline{m} \in \AA \mid f = \varphi_m}.
\end{equation*}
Here $\CT$ stands for ``Church's thesis'' because the realizers of $\CT(f)$ are numerals $\overline{m}$ witnessing Turing-computability of~$f$.
Construed as an extended Weihrauch predicate, $\CT$ is the map $\CT : \AA \to \ppAA$ defined by
\begin{equation*}
  \textsf{CT}(r) =
  \begin{cases}
    \set{\set{\overline{m} \in \AA \mid \all{k \in \NN} r \app \overline{k} = \varphi_m(k)}} &
     \text{if $r \app \overline{k}$ is a numeral for all $k \in \NN$,}
    \\
    \emptyset & \text{otherwise.}
  \end{cases}
\end{equation*}
The formula $\all{f \of F} \CT(f)$, which can be read as ``all functions are computable'', is realized by $s \in \effAA$ such that if $r \rz_F f$ then $s \app r = \overline{m}$ and $f = \varphi_m$ for some $m \in \NN$. Whether such a realizer exists depends on the choice of~$\effAA$ and $\AA$. For example, in the effective topos $\effAA = \AA = \NN$ and we may simply take $s = [a] a$. In contrast, in Kleene-Vesley realizability $\CT$ is an interesting representative of a degree that does not arise as an ordinary Weihrauch degree. Indeed it is not $\lnot\lnot$-dense because $\CT(h) = \emptyset$ for non-computable $h : \NN \to \NN$, but at the same time $\some{h \of F} \lnot \CT(h)$ has no computable realizers.

\subsection{Embedding of truth values}%
\label{sec:embedd-truth-valu}

Let us reformulate the embeddings of the subobject classifier into instance degrees from~\autoref{sec:truth-values} as extended Weihrauch degrees.
A short calculation reveals that the embedding from \autoref{prop:omega-monotone} takes a realizability truth value $\theta \in \pAA$ to the (ordinary) Weihrauch predicate $T_\theta : \AA \to \ppAA$, defined by
\begin{equation*}
  T_\theta(r) =
  \begin{cases}
    \set{\AA} & \text{if $r \in \theta$,} \\
    \emptyset & \text{otherwise.}
  \end{cases}
\end{equation*}
It is easily seen that $T_\theta \Wleq T_\xi$ holds precisely when there is $\ell \in \effAA$ such that $\ell \rz \theta \lthen \psi$, which reaffirms monotonicity of the embedding with respect to~$\lthen$.
In Kleene-Vesley realizability $\theta \mapsto T_\theta$ is just the familiar embedding of the Medvedev lattice into Weihrauch degrees.

The embedding from \autoref{prop:omega-antimonotone} takes $\theta \in \pAA$ to the modest extended Weihrauch predicate $\theta^\star : \AA \to \ppAA$, defined by
$
  \theta^\star(r) = \set{\theta}
$,
which in Kleene-Vesley realizability provides an anti-monotone semilattice embedding of the Medvedev lattice into extended Weihrauch degrees. It \emph{almost} maps into ordinary Weihrauch degrees, as only $\emptyset^\star(r) = \set{\emptyset}$ fails to be $\lnot\lnot$-dense.

\subsection{Reductions to and from non-modest degrees}%
\label{sec:non-unif-reduct}

The degrees from \autoref{sec:non-trivial-non} and \autoref{sec:embedd-truth-valu} are all modest, so one may wonder whether the
non-modest degrees are of any use. We show examples that convey how non-modest degrees allow us to control
an aspect of reductions that one might refer to as \emph{uniformity}.

A map $\phi : \NN \to \pAA$ may be seen either as a realizability predicate on the modest assembly~$N$ of natural numbers, as defined above, or on the non-modest assembly $\nabla \NN$.
Now, assuming we have another such map $\psi : \NN \to \pAA$, there are four possibilities:
\begin{enumerate}

\item\label{it:N-N}
  $(\phi, N) \ileq (\psi, N)$ when there are $\ell_1, \ell_2 \in \effAA$ such that, for every $m \in \NN$, there is (a unique) $n \in \NN$ with  $\ell_1 \app \numeral{m} = \numeral{n}$ and $\ell_2 \app \numeral{m} \rz \psi(n) \lthen \phi(m)$. This case is analogous to the ordinary Weihrauch reductions.

\item\label{it:nabla-N}
  $(\phi, \nabla \NN) \ileq (\psi, N)$ when there are $\ell_2 \in \effAA$ and $n \in \NN$ such that for all $m \in \NN$ we have $\ell_2 \rz \psi(n) \lthen \phi(m)$.
  This case is somewhat pathological, as it happens precisely when $\psi \iequiv \itop$ or $\phi \iequiv \true{\one}$.

\item\label{it:N-nabla}
  $(\phi, N) \ileq (\psi, \nabla \NN)$ when there is a map $f : \NN \to \NN$ and $\ell_2 \in \effAA$ such that for every $m \in \NN$ we have $\ell_2 \app \overline{m} \rz \psi(f(m)) \lthen \phi(m)$.
  This case is \emph{non-uniform} in the sense that~$\ell_1$ is replaced by a map~$f$ that need not be realized. An additional example of this kind is given below.

\item\label{it:nabla-nabla}
  $(\phi, \nabla \NN) \ileq (\psi, \nabla \NN)$ when there is a map $f : \NN \to \NN$ and $\ell_2 \in \effAA$ such that for every $m \in \NN$ $\ell_2 \rz \psi(f(m)) \lthen \phi(m)$. In contrast to the previous case, $\ell_2$ must work for every~$m$ \emph{without} being given it as an input.
\end{enumerate}
To summarize, a reduction $\phi \ileq \psi$ from a modest~$\phi$ to a non-modest~$\psi$ may be non-uniform in the sense that it need not compute an instance of~$\psi$ to which a given instance of~$\phi$ reduces.
In the opposite direction, a reduction $\phi \ileq \psi$ from a non-modest~$\phi$ to a modest~$\psi$ must be uniform in the sense that all instances of $\phi$ are reduced to the same instance of~$\psi$.

The above examples illustrate just the most extreme possibilities. We may calibrate the uniformity aspects of
reductions by using assemblies that are neither modest nor in the image of~$\nabla$.

\medskip

Let us conclude with another example of reductions between modest and non-modest degrees. This time we work directly with (extended) Weihrauch degrees in Kleene-Vesley realizability.
The principle of Weak Excluded Middle
\begin{equation*}
  \all{p \of \Omega} \lnot p \lor \lnot\lnot p
\end{equation*}
is represented by the (non-modest) extended Weihrauch predicate $\WLEM : \Baire \to \pow{\pow{\Baire}}$,
\begin{equation*}
  \WLEM(\alpha) = \set{\set{\numeral{0}}, \set{\numeral{1}}},
\end{equation*}
while the Limited principle of omniscience~\eqref{eq:lpo} is represented by the (modest) Weihrauch predicate
$\LPO : \Baire \to \pow{\pow{\Baire}}$,
\begin{equation*}
  \LPO(\alpha) =
  \begin{cases}
    \set{\set{\numeral{0}}} & \text{if $\all{n} \alpha_n = 0$} \\
    \set{\set{\numeral{1}}} & \text{if $\some{n} \alpha_n \neq 0$}.
  \end{cases}
\end{equation*}
The reduction $\LPO \ileq \WLEM$ is witnessed by computable maps $\ell_1 : \Baire \to \Baire$ and
$\ell_2 : \Baire \times \set{\numeral{0}, \numeral{1}} \to \set{\numeral{0}, \numeral{1}}$ such that, for every $\alpha \in \Baire$:
\begin{enumerate}[(a)]
\item $\ell_1(\alpha) \in \Baire$, and
\item for every $\theta \in \LPO(\alpha)$ there is $\xi \in \set{\set{\numeral{0}}, \set{\numeral{1}}}$ such that
  $\ell_2(\alpha, b) \in \theta$ for all $b \in \xi$.
\end{enumerate}
We may drop $\ell_1$ and simplify the two conditions to a single one:
\begin{enumerate}[(a),resume]
\item\label{it:lpo-ileq-wlem}
  if $\all{n} \alpha_n = 0$ then $\ell_2(\alpha, \numeral{0}) = \numeral{0}$ and if $\some{n} \alpha_n = 1$ then $\ell_2(\alpha, \numeral{1}) = \numeral{1}$.
\end{enumerate}
The map $\ell_2(\alpha, b) = b$ clearly satisfies the condition, hence $\LPO \Wleq \WLEM$ holds. Notice how the non-modesty allowed us to avoid computing which of the two possibilities in~\ref{it:lpo-ileq-wlem} happens.

How about the opposite reduction $\WLEM \Wleq \LPO$? It would be witnessed by computable $\ell_1$ and $\ell_2$ such that, for all $\alpha \in \Baire$:
\begin{enumerate}[(i)]
\item $\ell_1(\alpha) \in \Baire$, and
\item for every $\theta \in \set{\set{\numeral{0}}, \set{\numeral{1}}}$ there is $\xi \in \LPO(\ell_1(\alpha))$ such that $\ell_2(\alpha, b) \in \theta$ for all $b \in \xi$.
\end{enumerate}
The second condition is contradictory, as it requires both $\ell_2(\alpha, b) = \numeral{0}$ and $\ell_2(\alpha, b) = \numeral{1}$. Thus there is no such reduction.


\subsubsection*{Acknowledgment}
An initial version of the material presented here was carried out in cooperation with Kazuto Yoshimura from the Japan Advanced Institute of Science and Technology. Unfortunately, I have not had the opportunity to complete the work with Mr.~Yoshimura, as he left the institute and I lost all contact with him.

\bibliographystyle{alphaurl}
\bibliography{references.bib}

\begin{thebibliography}{Smy78}

\bibitem[Acz78]{aczel78:_type_theor_inter_const_set_theor}
Peter Aczel.
\newblock The type theoretic interpretation of constructive set theory.
\newblock In Angus Macintyre, Leszek Pacholski, and Jeff Paris, editors, {\em
  Logic Colloquium '77}, volume~96 of {\em Studies in Logic and the Foundations
  of Mathematics}, pages 55--66. Elsevier, 1978.

\bibitem[BG11]{brattka_gherardi_2011}
Vasco Brattka and Guido Gherardi.
\newblock Weihrauch degrees, omniscience principles and weak computability.
\newblock {\em The Journal of Symbolic Logic}, 76(1):143–176, 2011.

\bibitem[Ish06]{ishihara06:_rever_mathem_bishop_const_mathem}
Hajime Ishihara.
\newblock Reverse mathematics in {B}ishop’s constructive mathematics.
\newblock {\em Philosophia Scientiæ}, 6:43--59, 2006.

\bibitem[JL86]{jim86:_introd_higher_order_categ_logic}
Phil~Scott Jim~Lambek.
\newblock {\em Introduction to Higher Order Categorical Logic}.
\newblock Cambridge University Press, 1986.

\bibitem[KV65]{KleeneSC:fouim}
Stephen~Cole Kleene and Richard~Eugène Vesley.
\newblock {\em The Foundations of Intuitionistic Mathematics, especially in
  relation to recursive functions}.
\newblock North-Holland Publishing Company, 1965.

\bibitem[LM92]{lane92:_sheav_geomet_logic}
Saunders~Mac Lane and Ieke Moerdijk.
\newblock {\em Sheaves in Geometry and Logic – A first introduction to topos
  theory}.
\newblock Springer Verlag, 1992.

\bibitem[Smy78]{smyth78:_power_domain}
M.~B. Smyth.
\newblock Power domains.
\newblock {\em Journal of Computer and System Sciences}, 16:23--36, 1978.

\bibitem[vO08]{oosten08:_realiz}
Jaap van Oosten.
\newblock {\em Realizability: An Introduction To Its Categorical Side}, volume
  152 of {\em Studies in logic and the foundations of mathematics}.
\newblock Elsevier, 2008.

\bibitem[Wei00]{weihrauch00:_comput_analy}
Klaus Weihrauch.
\newblock {\em Computable Analysis}.
\newblock Springer, Berlin, 2000.

\end{thebibliography}

\end{document}